\newtheorem{thm}{Theorem}[section]
\newtheorem*{thm*}{Theorem} 
\newtheorem{lemma}[thm]{Lemma}
\newtheorem{cor}[thm]{Corollary}
\newtheorem{prop}[thm]{Proposition}
\theoremstyle{definition}
\newtheorem{definition}[thm]{Definition}
\newtheorem{ex}[thm]{Example} 
\theoremstyle{remark}
\newtheorem{rem}[thm]{Remark}
\newcommand{\mr}{{\mathbb R}}
\newcommand{\mn}{{\mathbb N}}
\newcommand{\mc}{{\mathbb C}}
\newcommand{\md}{{\mathbb D}}
\renewcommand{\rho}{\varrho}
\newcommand{\eps}{\varepsilon}
\renewcommand{\Re}{\operatorname{Re}}
\newcommand{\dist}{\operatorname{dist}}
\newcommand{\tr}{\operatorname{tr}}
\newcommand{\ran}{\operatorname{Ran}}
\newcommand{\hil}{\mathcal{H}}
\newcommand{\bdd}{\mathcal{B}}
\newcommand{\bi}{\mathcal{I}}
\newcommand{\ff}{\mathcal{F}}
\newcommand{\rank}{\operatorname{Rank}}
\newcommand{\dom}{\operatorname{Dom}}
\newcommand{\ac}{\mathcal{A}}
\newcommand{\pp}{\lceil p \rceil}
\newcommand{\bn}{V}
\numberwithin{equation}{section}
\begin{document}

\title[Perturbation determinants in Banach spaces]{Perturbation determinants in Banach spaces - With an application to eigenvalue estimates for perturbed operators} 
 
\begin{abstract}
In the first part of this paper we provide a self-contained introduction to (regularized) perturbation determinants for operators in Banach spaces. In the second part, we use these determinants to derive new bounds on the discrete eigenvalues of compactly perturbed operators, broadly extending some recent results by Demuth et al. In addition, we also establish new bounds on the discrete eigenvalues of generators of $C_0$-semigroups.
\end{abstract}

\author[M. Hansmann]{Marcel Hansmann}
\address{Faculty of Mathematics\\ 
Chemnitz University of Technology\\
Chemnitz\\
Germany.}
\email{marcel.hansmann@mathematik.tu-chemnitz.de}

\maketitle


\section{Introduction}
 
The theory of perturbation determinants and regularized perturbation determinants for \emph{Hilbert space} operators is very well-developed and has been successfully applied in a large variety of settings (see, e.g., \cite{b_Gohberg69}, \cite{MR0482328}, \cite{b_Simon05}, \cite{MR1180965}, \cite{MR2201310}, \cite{MR3077277} and references therein). On the other hand, with two recent exceptions \cite{DemHanauska13, MR3296588}, regularized perturbation determinants for operators on general Banach spaces seem to have  been hardly considered at all. It is the aim of this paper to provide some first results for a corresponding theory. Moreover, by applying these results to the study of the distribution of eigenvalues of compactly perturbed operators, we will also provide  good reasons why the development of such a theory is a worthwhile endeavor. 

To avoid misunderstandings: we do not claim that there does not exist an extensive determinant theory for operators on Banach spaces. Quite on the contrary, there exist several approaches to define a (regularized) determinant for operators of the form $I-L$, where $I$ denotes the identity operator and $L$ is element of a suitable subclass of (power-)compact operators. Let us mention the work of K\"onig \cite{MR568991} (see also \cite{MR1840556}), the axiomatic determinant theory developed by Pietsch \cite{MR917067}, and the alternative approach by Gohberg et al. \cite{MR1744872}. It is just the fact that these determinants, as far as we can say, have not been used to define \emph{perturbation} determinants, which we alluded to in the previous paragraph. 

What, then, is a perturbation determinant? A very rough description goes as follows: Given a bounded operator $A$ and a compact operator $K$, both defined on a complex Banach space $X$, a perturbation determinant of $A$ by $K$ (here we use the terminology of \cite{b_Gohberg69}) is a holomorphic function $d=d^{A,K}$ defined on the resolvent set $\rho(A)$ of $A$ which has the property that $d(\lambda)=0$ if and only if $\lambda$ is in the spectrum of $A+K$. Moreover, if $\lambda$ is a discrete eigenvalue of this operator, then its algebraic multiplicity should coincide with the order of $\lambda$ as a zero of $d$. The name \emph{perturbation determinant} stems from the fact that here we consider $A$ as some known (free) operator, which is perturbed by the compact operator $K$.  In particular, the perturbation determinant allows us to transfer the problem of studying the spectrum  of the perturbed operator $A+K$ to the classical problem of analyzing the zero set of a holomorphic function.

As an example, let us take a look at the well-studied case of operators acting on a complex Hilbert space $\hil$ (see \cite[Chapter IV.2]{b_Gohberg69} or \cite{MR0482328}). Here, assuming that the operator $L$ is an element of the $n$th von Neumann-Schatten class ${S}_n(\hil), n \in \mn,$ (which consists of those compact operators on $\hil$ whose singular values are in $l_n(\mn)$), one defines the $n$-regularized determinant\footnote{In case $n=1$ we would not speak of a regularized determinant, but simply of a determinant.} of $I-L$ by
\begin{equation}
  \label{eq:37}
 \operatorname{det}_n(I-L):= \prod_j \left( (1-\lambda_j(L)) \exp\left( \sum_{k=1}^{n - 1} \frac {\lambda_j^k(L)}{k} \right) \right),  
\end{equation}
 where $(\lambda_j(L))$ denotes the sequence of discrete eigenvalues of  $L$ (with the convention that $\lambda_{N+1}(L)=\lambda_{N+2}(L)=\ldots=0$ if $L$ has only $N$ discrete eigenvalues). Note that the above product is convergent since the eigenvalue sequence of an operator in $S_n(\hil)$ is in $l_n(\mn)$. Moreover, we see that $\operatorname{det}_n(I-L)=0$ if and only if $1$ is in the spectrum $\sigma(L)$ of $L$. Now, given a bounded operator $A$ on $\hil$ and $K \in S_n(\hil)$, we can use the fact that $S_n(\hil)$ is an ideal in the algebra $\bdd(\hil)$  of all bounded operators on $\hil$ (and so $K(\lambda-A)^{-1} \in S_n(\hil)$) to define the $n$-regularized perturbation determinant of $A$ by $K$ as 
 \begin{equation}
   \label{eq:38}
 d_n^{A,K}(\lambda):= {\operatorname{det}}_n(I-K(\lambda-A)^{-1}), \quad \lambda \in \rho(A).   
 \end{equation}
Then  $d_n^{A,K}(\lambda)=0$ if and only if $1 \in \sigma(K(\lambda-A)^{-1})$ and it is not difficult to see that this is the case if and only if $\lambda \in \sigma(A+K)$. What is not obvious, but still is true, is the fact that this function is holomorphic and that in case that $\lambda$ is a discrete eigenvalue of $A+K$, its algebraic multiplicity (as an eigenvalue) and its order (as a zero) coincide. The easiest way to prove these results is by  a reduction to the case of finite rank perturbations, using the fact that the finite rank operators are dense in $S_n(\hil)$.

In this paper, abstracting from the Hilbert space case sketched above, we will introduce regularized determinants for operators $I-L$ on a complex Banach space $X$, assuming that $L$ is element of a subclass $\bi$ of compact operators on $X$ which has the following properties:
\begin{itemize}
    \item[-] $\bi$ is a (quasi-)normed subspace of $\bdd(X)$ and the set of finite rank operators $\ff(X)$ is dense in $\bi$,
\item [-] $\bi$ is an ideal in the algebra $\bdd(X)$,
\item[-] there exists $p >0$ such that for every $L\in \bi$ the sequence $(\lambda_j(L))$ of discrete eigenvalues of $L$ is in $l_p$.
\end{itemize}  
For reasons of brevity, in this paper such an ideal $\bi$ will be called an $l_p$-ideal\footnote{Concerning this terminology, see Remark \ref{rem42}.} in $\bdd(X)$ (we refer to Section $4$ for the precise definition). Given the $l_p$-ideal $\bi$, we will first define ${\det_n}(I-L)$ for finite rank operators $L \in \ff(X)$ using formula (\ref{eq:37}) (with $n= \lceil p \rceil$), and then show that this function can be continuously extended to all of $\bi$. Having achieved this, the perturbation determinant of $A$ by $K$ (with $K \in \bi$) can be defined as in (\ref{eq:38}) and we will check that it has all the desired properties. 

The idea to define regularized determinants by reduction to finite rank operators already appears in  Pietsch's monograph \cite{MR917067} and has later been systematically developed by Gohberg et al. in \cite{MR1744872}. Indeed, in \cite{MR1744872} we can find several equivalent conditions describing when the regularized determinant (\ref{eq:37}), initially defined on $\ff(X)$, can be continuously extended to all of $\bi$. One of these conditions, the local Lipschitz-continuity of $L \mapsto \operatorname{det}_n(I-L)$, will be a consequence of our third assumption on $\bi$, so that the theory of \cite{MR1744872} applies. To be precise, in order for their extension theory to work,  the authors of \cite{MR1744872} do only require that $\bi$ is a certain sub-algebra of $\bdd(X)$, but our stronger requirement of it being an ideal seems to be necessary in our subsequent construction of the perturbation determinant. Moreover, in contrast to us, the authors of \cite{MR1744872} do only consider the case where $\bi$ is a normed subspace (whereas we allow for a quasi-normed space), so that in this respect our considerations are slightly more general. 

Our results on regularized determinants are also closely related to the axiomatic approach developed by Pietsch in \cite{MR917067}. Pietsch studies the question of the existence of continuous determinants (and regularized determinants) for operators in quasi-Banach operator ideals (see Section \ref{sec:examples} below) and is also interested in the connection between determinants and traces (a topic we will not speak about at all). In particular, the assumptions we made on $\bi$ will be satisfied whenever $\bi$ is a component of an approximative quasi-Banach operator ideal of eigenvalue type $l_p$ (in the sense of Pietsch), meaning that our results allow the construction of perturbation determinants if the perturbation $K$ is, e.g., a nuclear or a $q$-summing operator, or an element of one of the classical $s$-number ideals (like the approximation- or Weyl-number ideals). We refer to Section \ref{sec:examples} for more such examples.

Our main impetus for the present work came from the recent paper \cite{MR3296588}, which studied the distribution of eigenvalues of compactly perturbed operators on Banach spaces. One of the main results of  \cite{MR3296588} reads as follows: Given a bounded operator $A \in \bdd(X)$ and a compact operator $K$ on $X$, let $n_{A+K}(s)$ denote the number of discrete eigenvalues of $A+K$ in $\{ \lambda \in \mc : |\lambda| > s\}$. Then the following holds:
\begin{equation}
  \label{eq:39}
  n_{A+K}(s) \leq   C_{p} \frac{s}{(s-\|A\|)^{p+1}} \sum_{ j \in \mn} a_j(K)^p, \qquad s> \|A\|,
\end{equation}
where $a_j(K)$ denotes the $j$th approximation number of $K$. We note that for $A=0$ this bound reduces to a  classical eigenvalue estimate for compact operators due to K\"onig \cite{MR0482266}.  The authors of \cite{MR3296588} proved the above inequality with the help of a certain kind of perturbation determinant, which allowed them to reduce the problem to a study of the zeros of a holomorphic function.

With the results of the present paper, we will be able to broadly generalize the above estimate. Namely, we will show that a corresponding bound is true whenever $K$ is element of an $l_p$-ideal $\bi$, i.e.
\begin{equation}
  \label{eq:40}
  n_{A+K}(s) \leq   C_{p,\bi} \frac{s}{(s-\|A\|)^{p+1}} \|K\|_{\bi}^p, \qquad s> \|A\|,
\end{equation}
where $\|.\|_\bi$ denotes the quasi-norm of $\bi$ (see Corollary \ref{cor:1}). In this way we are able to extend a variety of classical eigenvalue estimates for compact operators (see, e.g., \cite{MR889455} or \cite{MR917067}) to a perturbative setting. In order to indicate the wide applicability of these new results (also to unbounded operators), in the last section of this paper we will use  (\ref{eq:40}) to obtain upper bounds on the number of discrete eigenvalues  of (perturbed) generators of $C_0$-semigroups.

The present work is meant as a self-contained introduction to regularized perturbation determinants and their application to the study of eigenvalues of perturbed operators.  As we have mentioned before, our results strongly rely on the previous works by Pietsch \cite{MR917067} and  Gohberg et al. \cite{MR1744872}. In particular, our way of constructing regularized determinants for $l_p$-ideals is an adaption of the methods of \cite{MR1744872} and might, to an expert reader, not offer much new. Indeed, we don't claim that our results on the \emph{existence} of regularized determinants are new, though it might be difficult to find them in the existing literature. On the other hand, we think that our construction of \emph{perturbation} determinants will be new even to the experts, as will be our results on the eigenvalues of perturbed operators.

Let us conclude this introduction with a short description of the contents of this paper: In the next section we will introduce some notation and discuss some preliminary results. The third section contains a discussion of regularized determinants of finite rank operators. In the first part of Section $4$ we will extend the definition of regularized determinants from finite rank operators to $l_p$-ideals and in the second part of this section we introduce and study the corresponding perturbation determinants. Section $5$ contains a discussion of various examples of $l_p$-ideals. In Section $6$, we will use our results to obtain eigenvalue estimates for perturbed operators, and in the final Section $7$ we will consider an application to generators of $C_0$-semigroups.


\section{Preliminaries}
a) Throughout this paper $X$ and $Y$ will denote complex Banach spaces and $\bdd(X,Y)$ denotes the algebra of all bounded linear operators from $X$ to $Y$.  The operator norm is denoted by $\|.\|$ (it will be clear from the context which spaces are considered). Furthermore, the finite rank operators from $X$ to $Y$ are denoted by $\ff(X,Y)$. As usual, we set $\bdd(X):=\bdd(X,X)$, et cetera. The identity operator on $X$ is denoted by $I$ and the range of $B \in \bdd(X)$ is denoted by $\ran(B)$. Finally, the dual space of $X$ is denoted by $X'$.

b) The \emph{spectrum} of a closed operator $Z$ in $X$ is defined as 
$$ \sigma(Z) := \{ \lambda \in \mc \:|\: \lambda - Z:=\lambda I -Z \text{ is not invertible in } \bdd(X)\}$$
and $\rho(Z):=\mc \setminus \sigma(Z)$ denotes its \emph{resolvent set}. The \emph{discrete spectrum} of $Z$, denoted by $\sigma_d(Z)$, consists of all isolated eigenvalues of $Z$, with finite algebraic multiplicity (these eigenvalues are also called \emph{discrete eigenvalues}). Here the algebraic multiplicity of an isolated eigenvalue $\lambda$ is given by the dimension of the corresponding Riesz projection
$$ P_Z(\lambda):= \frac 1 {2\pi i} \int_\gamma (\mu-Z)^{-1} d\mu,$$
where the contour $\gamma$ is a counterclockwise oriented circle centered at $\lambda$, with sufficiently small radius (excluding the rest of $\sigma(Z)$).  The \emph{essential spectrum} $\sigma_{ess}(Z)$ consists of all $\lambda \in \mc$ such that $\lambda-Z$ is not a Fredholm operator. It is always disjoint from the discrete spectrum of $Z$. Moreover, if $\Omega \subset \mc \setminus \sigma_{ess}(Z)$ is a connected component and $\Omega \cap \rho(Z) \neq \emptyset$, then $\Omega \cap \sigma(Z) \subset \sigma_d(Z)$. In particular, the discrete eigenvalues of $Z$ can accumulate at $\sigma_{ess}(Z)$ only. If $K$ is a compact operator on $X$, then the essential spectra of $Z$ and $Z+K$ coincide. This is usually referred to as \emph{Weyl's theorem}. As a reference for all of the previous results we refer to \cite{MR1130394}.

c) Let $\bn$ be a complex vector space. A map $\|.\|_\bn : \bn \to [0,\infty)$ is called a \emph{quasi-norm} if the following holds: (i) $\|x\|_\bn=0$ if and only if $x=0$, (ii) there exists $q \geq 1$ (the \emph{quasi-triangle constant}) such that $\|x+y\|_\bn \leq q (\|x\|_\bn+\|y\|_\bn)$
for all $x,y \in \bn$, and (iii)  $\|\lambda x\|_\bn =|\lambda| \|x\|_\bn$ for all $x \in \bn, \lambda \in \mc$. 
Clearly, if $q=1$ we speak of a norm instead of a quasi-norm. In particular, the quasi-norm induces a metrizable topology on $\bn$ such that the algebraic operations are continuous (a fundamental system of neighborhoods of zero is given by the sets $\{ x \in \bn : \|x\|_\bn \leq 1/n\},$ $n \in \mn$). In this paper, whenever we consider a quasi-normed space $(\bn,\|.\|_\bn)$ we will equip it with this metrizable topology. We say that the quasi-normed space $(\bn,\|.\|_\bn)$ is a quasi-Banach space if it is complete with respect to the corresponding metric. 
For more information on quasi-normed spaces, we refer to \cite{MR0248498} and \cite{MR917067}.

\section{Determinants for finite rank operators}

Let $F \in \ff(X)$ and $R:=\ran(F)$, so $R$ is finite-dimensional and invariant under $F$. Denoting the restriction of $F$ to $R$ by $F_R$, we can thus define
$$ \det(I-F):=\det(I_R - F_R),$$
where $I_R$ denotes the identity operator on $R$.  Similarly, we define the trace of $F$ as
$$ \tr(F):=\tr(F_R).$$

Note that if $U \subset X$ is any finite-dimensional subspace with $R \subset U$, then it is not difficult to show that  $\det(I_U-F_U)=\det(I_R-F_R)$ and $\tr(F_U)=\tr(F_R)$. Using this fact, the following properties of the determinant and trace of a finite rank operator immediately follow from the respective properties of the determinant and trace of an operator on a finite-dimensional space. For more details, see \cite{kato}, Section III.4.3.
\begin{prop}\label{prop:0}
(a)  Let $F,G \in \ff(X), B \in \bdd(X)$. Then the following holds:
  \begin{enumerate}
      \item $\det((I-F)(I-G))= \det(I-F)\det(I-G)$.\footnote{Note that $(I-F)(I-G)=I-(F+G-FG)$ and $F+G-FG \in \ff(X)$.}
      \item $\det(I-F)\neq 0$ if and only if $I-F$ is invertible in $\bdd(X)$.
      \item $\tr: \ff(X) \to \mc$ is linear.
      \item $\det(I-FB)=\det(I-BF)$ and $\tr(FB)=\tr(BF)$.
      \item $\det(I-F)=\prod_{j}(1-\lambda_j(F))$ and $\tr(F)=\sum_j \lambda_j(F)$, where $(\lambda_j(F))$ denote the discrete eigenvalues of $F$, counted according to their algebraic multiplicity.
  \end{enumerate}
(b) Let $U \subset X$ be a finite-dimensional subspace and assume that $(F_n)_{n \in \mn} \subset \ff(X)$ and $F \in \ff(X)$ satisfy $\ran(F) \subset U$ and $\ran(F_n) \subset U$ for all $n \in \mn$. If $\|F_n-F\| \to 0$ for $n \to \infty$, then 
$$ \det(I-F_n) \overset{n \to \infty}{\to} \det(I-F) \quad \text{and} \quad \tr(F_n) \overset{n \to \infty}{\to} \tr(F).$$
\end{prop}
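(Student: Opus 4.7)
The plan is to exploit the observation already noted just before the statement: if $U\subset X$ is any finite-dimensional subspace with $\ran(F)\subset U$ then $F(U)\subset\ran(F)\subset U$, so such a $U$ is automatically $F$-invariant, and the values $\det(I_U-F_U)$ and $\tr(F_U)$ are independent of the particular $U$ chosen. Consequently each item of part~(a) can be reduced to an assertion about operators on a sufficiently large common finite-dimensional invariant subspace, where it becomes elementary linear algebra. The only step requiring genuine thought is~(iv), because a single $U\supset\ran(F)$ is in general not invariant under $B$.

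For~(i) and~(iii) I would choose $U\supset\ran(F)+\ran(G)$. Then $U$ is invariant under $F$, $G$ and $F+G-FG$ (whose range lies in $\ran(F)+\ran(G)$), the restrictions satisfy $((I-F)(I-G))_U=(I_U-F_U)(I_U-G_U)$ and $(F+G)_U=F_U+G_U$, and the assertions follow from multiplicativity of the determinant and linearity of the trace on the finite-dimensional algebra $\bdd(U)$. For~(v), take any finite-dimensional $U\supset\ran(F)$ and note that every generalized eigenvector $v$ of $F$ to a non-zero eigenvalue $\lambda$ actually lies in $\ran(F)$: expanding $(F-\lambda)^k v=0$ expresses $(-\lambda)^k v$ as a linear combination of $Fv,\ldots,F^kv$, so $v\in\ran(F)\subset U$. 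Thus the non-zero discrete eigenvalues of $F$ and of $F_U$ coincide with algebraic multiplicities, and since in finite dimensions the identities $\det(I_U-F_U)=\prod_j(1-\lambda_j(F_U))$ and $\tr(F_U)=\sum_j\lambda_j(F_U)$ are insensitive to zero eigenvalues, (v) follows.

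For~(ii), fix $U\supset\ran(F)$. If $\det(I-F)=\det(I_U-F_U)\neq 0$, one verifies by direct computation that
$$ Ty:=y+(I_U-F_U)^{-1}(Fy),\qquad y\in X $$
(well-defined because $Fy\in\ran(F)\subset U$) is a bounded two-sided inverse of $I-F$ on $X$. Conversely, if $I-F$ is invertible in $\bdd(X)$, its action on the finite-dimensional space $U$ is given by $I_U-F_U$, which is therefore injective and hence bijective, so $\det(I_U-F_U)\neq 0$. For~(iv), I would pick finite-dimensional subspaces $U\supset\ran(F)$ and $W\supset B(U)$. Then $F$ restricts to a map $F|_W\colon W\to U$ (since $\ran(F)\subset U$) and $B$ restricts to $B|_U\colon U\to W$, and a direct inspection shows that $U$ is $FB$-invariant with $(FB)|_U=F|_W\circ B|_U$, while $W$ is $BF$-invariant with $(BF)|_W=B|_U\circ F|_W$. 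The rectangular Sylvester identity and the cyclicity of the trace applied to the finite-dimensional compositions $F|_W B|_U\colon U\to U$ and $B|_U F|_W\colon W\to W$ then yield
$$ \det(I_U-F|_W B|_U)=\det(I_W-B|_U F|_W),\qquad \tr(F|_W B|_U)=\tr(B|_U F|_W), $$
which is exactly~(iv).

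Finally, (b) is essentially immediate: since the ranges of all the $F_n$ and of $F$ lie in the fixed finite-dimensional $U$, the restrictions $(F_n)_U$ converge to $F_U$ in the norm of the finite-dimensional algebra $\bdd(U)$, on which determinant and trace are continuous functions. As already flagged, the only genuinely non-routine step is~(iv); the key trick is to introduce the auxiliary target space $W\supset B(U)$, so that $F$ and $B$ can be viewed as rectangular maps between the finite-dimensional spaces $U$ and $W$, and the standard rectangular Sylvester identity applies directly in spite of the absence of a single $X$-subspace simultaneously invariant under $F$ and $B$.
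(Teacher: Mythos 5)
Your proof is correct and fills in precisely the reduction-to-finite-dimensions argument that the paper itself only sketches (the paper simply notes that $\det(I_U-F_U)$ is independent of the choice of finite-dimensional $U\supset\ran(F)$ and refers to Kato, Section III.4.3, for details). Your treatment of part (a.iv) via two auxiliary subspaces $U\supset\ran(F)$, $W\supset B(U)$ and the rectangular Sylvester identity is exactly the right way to handle the fact that no single finite-dimensional subspace need be simultaneously $F$- and $B$-invariant, so the approach matches the paper's in substance while being fully written out.
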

The following result is due to Howland, see \cite{MR0417827}.
\begin{prop}\label{howland}
Let $\: \Omega \subset \mc$ be open and let $F: \Omega \to \bdd(X)$ be analytic. Suppose that $F(\lambda) \in \ff(X)$ for all $\lambda \in \Omega$. Then the mappings
$$ \Omega \ni \lambda \mapsto \det(I-F(\lambda)) \quad \text{and} \quad \Omega \ni \lambda \mapsto \tr(F(\lambda))$$
are analytic as well.
\end{prop}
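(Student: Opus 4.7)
Fix $\lambda_0 \in \Omega$ and aim to prove analyticity of both functions in a neighborhood of $\lambda_0$. The plan is to reduce the infinite-dimensional determinant and trace to finite-dimensional ones by invoking the cyclic identity from Proposition~\ref{prop:0}(a)(iv): if $F(\lambda)$ admits a factorization $F(\lambda) = A(\lambda)\, B(\lambda)$, with $B(\lambda): X \to \mc^n$ and $A(\lambda): \mc^n \to X$ both holomorphic and $n$ independent of $\lambda$, then
$$ \det(I - F(\lambda)) = \det(I_n - B(\lambda) A(\lambda)), \qquad \tr(F(\lambda)) = \tr(B(\lambda) A(\lambda)), $$
and the right-hand sides are ordinary $n \times n$ determinants and traces whose entries depend holomorphically on $\lambda$; analyticity is then immediate from the corresponding finite-dimensional statement.

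To produce such a factorization locally, I would use the Cauchy integral formula. Pick a closed disk $\bar D \subset \Omega$ centered at $\lambda_0$, so that for $\lambda$ in the open disk
$$ F(\lambda) = \frac{1}{2\pi i}\oint_{\partial D} \frac{F(\mu)}{\mu - \lambda}\, d\mu, $$
and approximate by Riemann sums $F_N(\lambda) = \sum_{k=1}^N g_k(\lambda) F(\mu_k)$ with nodes $\mu_k \in \partial D$ and holomorphic weights $g_k(\lambda) = w_k/(\mu_k - \lambda)$. Each $F(\mu_k) \in \ff(X)$ factors as $A_k B_k$ through $\mc^{\rank(F(\mu_k))}$, so concatenating yields $F_N(\lambda) = \tilde A_N(\lambda)\, \tilde B_N$ factoring through a fixed $\mc^{n_N}$ with $\tilde A_N$ holomorphic. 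The preceding paragraph then gives holomorphy of $\det(I - F_N(\lambda))$ and $\tr(F_N(\lambda))$ on $D$, while $F_N \to F$ in operator norm uniformly on compact subsets of $D$.

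The main obstacle is to pass to the limit $N \to \infty$. At each fixed $\lambda$, both $F_N(\lambda)$ and $F(\lambda)$ lie in the finite-dimensional subspace $U_{N,\lambda} := Y_N + \ran(F(\lambda))$, where $Y_N = \mathrm{span}(\bigcup_k \ran(F(\mu_k)))$, so Proposition~\ref{prop:0}(b) can be applied to each pairwise comparison. However, the resulting Lipschitz constant grows with $\dim U_{N,\lambda}$, so neither pointwise convergence of $\det(I - F_N)$ and $\tr(F_N)$ nor the locally uniform bound required by Vitali's theorem is automatic. To close the argument I would first establish that $\rank(F(\mu))$ is uniformly bounded on the compact set $\partial D$ — which follows from the lower semi-continuity of the rank, combined with a Baire-category argument exploiting the standing assumption that $F(\mu) \in \ff(X)$ for every $\mu$ — and then leverage this uniform rank bound to secure locally uniform estimates on $\det(I - F_N)$ and $\tr(F_N)$. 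Vitali's theorem (equivalently, Morera's after uniform convergence along triangles) then promotes the pointwise limit to a holomorphic function, yielding the proposition. The rank-bound step is the genuinely subtle point of the proof.
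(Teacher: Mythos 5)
The paper does not prove this proposition; it cites Howland's 1971 note, so there is no ``paper's own proof'' to compare to line by line. Nonetheless, your argument as written has a genuine gap in the limit-passage step. The Riemann sums $F_N(\lambda)=\sum_{k=1}^N g_k(\lambda)F(\mu_k)$ have rank growing like $N$ (even if $\rank F(\mu)$ is uniformly bounded by some $N_0$ on $\partial D$, one only gets $\rank F_N(\lambda)\leq N N_0$), and neither $\tr$ nor $\det(I-\cdot)$ is continuous on $\ff(X)$ in the operator norm once the ranks are unbounded. A standard counterexample: with $P_k$ the rank-one projection onto $e_k$ in $\ell^2$, put $G_N=\frac1N\sum_{k=1}^{N^2}P_k$; then $\|G_N\|\to 0$ but $\tr(G_N)=N\to\infty$ and $\det(I-G_N)=(1-1/N)^{N^2}\to 0\neq 1$. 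So norm convergence $F_N\to F$ does not yield pointwise convergence of $\det(I-F_N(\lambda))$ or $\tr(F_N(\lambda))$, and the crude bound $|\det(I-F_N)|\leq(1+\|F_N\|)^{\rank F_N}$ does not give the local uniform boundedness that Vitali needs either. Your plan to ``leverage this uniform rank bound to secure locally uniform estimates'' is therefore not substantiated: the bound on $\rank F$ does not transfer to a bound on $\rank F_N$, and Vitali in any case requires pointwise convergence to the correct limit on a set with an accumulation point, which is exactly what is unclear. A secondary issue is that lower semicontinuity of rank plus Baire gives a rank bound only on a dense open subset of $\Omega$; to propagate it to all of $\Omega$ (or to $\partial D$) one also needs the identity theorem, applied to the $(n+1)\times(n+1)$ ``minors'' $\lambda\mapsto\det\bigl(f_i(F(\lambda)x_j)\bigr)$, which vanish on an open set and hence, by analyticity and connectedness, everywhere.

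The underlying idea you state at the outset is the right one: produce a local factorization $F(\lambda)=L(\lambda)R(\lambda)$ through a fixed $\mc^m$ with $L,R$ analytic, so that $\det(I-F(\lambda))=\det(I_m-R(\lambda)L(\lambda))$ and $\tr F(\lambda)=\tr(R(\lambda)L(\lambda))$ are finite matrix expressions. But the Riemann-sum detour does not deliver such a factorization for $F$ itself, only for each approximant $F_N$. A route that actually works: once $\rank F(\lambda)\leq N$ on a connected neighbourhood, pick a point $\lambda_1$ where the rank is maximal, write $F(\lambda_1)=\sum_{j=1}^N f_j\otimes x_j$, choose biorthogonal $g_j\in X'$, $y_j\in X$, and set $P=\sum g_j\otimes x_j$, $Q=\sum f_j\otimes y_j$. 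Near $\lambda_1$ the block $PF(\lambda)Q$ is an invertible $N\times N$ analytic matrix, and the Schur-complement identity together with $\rank F(\lambda)\leq N$ forces $(I-P)F(\lambda)(I-Q)=(I-P)F(\lambda)Q\,(PF(\lambda)Q)^{-1}\,PF(\lambda)(I-Q)$, giving the desired analytic factorization through $\mc^N$; the exceptional points (where $PF(\lambda)Q$ is singular) form a discrete set and are handled by a removable-singularity argument using the local boundedness of $\det(I-F)$ and $\tr F$. This is essentially Howland's argument and is quite different from the Riemann-sum approximation you propose.
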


Now let $p>0$. We define the \emph{$p$-regularized determinant} of $I-F, F \in \ff(X),$ as
\begin{equation}
  \label{eq:16}
  {\det}_p(I-F):= \det(I-F) \exp\left( \sum_{k=1}^{\pp - 1} \frac 1 k \tr(F^k) \right),
\end{equation}
where $\pp := \min\{ n \in \mn : n \geq p \}$ and $\sum_{k=1}^0 (\ldots):=0$. Note that  from the spectral mapping theorem and Proposition \ref{prop:0}  we obtain the alternative  representation
\begin{equation}
  \label{eq:17}
  {\det}_p(I-F) = \prod_j \left( (1-\lambda_j(F)) \exp\left( \sum_{k=1}^{\pp - 1} \frac {\lambda_j^k(F)}{k} \right) \right),
\end{equation}
where we consider the discrete eigenvalues of $F$, counted according to their algebraic multiplicity.

\begin{prop} \label{prop:1} Let $p>0$. 
  \begin{enumerate}
      \item[(a)] For $F,G \in \ff(X), B \in \bdd(X)$ the following holds:
  \begin{enumerate}
    \item ${\det}_p(I-F)\neq 0$ if and only if $I-F$ is invertible in $\bdd(X)$.
   \item ${\det}_p(I-FB)={\det}_p(I-BF)$. 
    \item If $H:=F+G-FG$, then
  \begin{equation}
    \label{eq:19}
  {\det}_p((I-F)(I-G)) = \det(I-F) {\det}_p(I-G) \exp \left( \sum_{k=1}^{\pp -1} \frac{\tr(H^k-G^k)}{k} \right).  
  \end{equation}

  \end{enumerate}
\item[(b)] Let $U \subset X$ be a finite-dimensional subspace and assume that $(F_n)_{n \in \mn} \subset \ff(X)$ and $F \in \ff(X)$ satisfy $\ran(F) \subset U$ and $\ran(F_n) \subset U$ for all $n \in \mn$. If $\|F_n-F\| \to 0$ for $n \to \infty$, then $ {\det}_p(I-F_n) \to {\det}_p(I-F)$ for $n \to \infty$.
\item[(c)] Let $\: \Omega \subset \mc$ be open and let $F: \Omega \to \bdd(X)$ be analytic. Suppose that $F(\lambda) \in \ff(X)$ for all $\lambda \in \Omega$. Then the mapping
$ \Omega \ni \lambda \mapsto {\det}_p(I-F(\lambda))$ is analytic as well.
  \end{enumerate}
\end{prop}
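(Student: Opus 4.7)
The overall plan is to bootstrap each assertion from its unregularized analogue in Propositions \ref{prop:0} and \ref{howland}, exploiting the fact that the regularization factor in (\ref{eq:16}) is an exponential of traces, and hence never vanishes and behaves well under limits and multiplication. The borderline case $\pp = 1$ is trivial (since then $\det_p = \det$), so I tacitly assume $\pp \geq 2$. For (a)(i), since $\exp$ never vanishes, $\det_p(I-F) = 0$ if and only if $\det(I-F) = 0$, which by Proposition \ref{prop:0}(a)(ii) is equivalent to non-invertibility of $I-F$. For (a)(ii), Proposition \ref{prop:0}(a)(iv) gives $\det(I-FB) = \det(I-BF)$; writing $(FB)^k = F(BF)^{k-1}B$ and applying the cyclic property of the trace yields $\tr((FB)^k) = \tr((BF)^k)$, so the two exponential factors coincide. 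For (a)(iii), I would start from the definition of $\det_p(I-H)$, use $\det(I-H) = \det(I-F)\det(I-G)$ from Proposition \ref{prop:0}(a)(i), and split the exponential via linearity of the trace as $\exp(\sum_k \tr(G^k)/k) \cdot \exp(\sum_k \tr(H^k - G^k)/k)$; absorbing $\det(I-G)\exp(\sum_k \tr(G^k)/k) = \det_p(I-G)$ then gives (\ref{eq:19}).

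For (b), Proposition \ref{prop:0}(b) already yields $\det(I-F_n) \to \det(I-F)$. To handle the higher-order traces I observe that $\ran(F_n^k), \ran(F^k) \subset U$ (since the outermost factor in each product lands in $U$), and that the telescoping identity $F_n^k - F^k = \sum_{j=0}^{k-1} F_n^{k-1-j}(F_n - F) F^j$ together with the boundedness of $(\|F_n\|)$ gives $\|F_n^k - F^k\| \to 0$. A second application of Proposition \ref{prop:0}(b) then produces $\tr(F_n^k) \to \tr(F^k)$, and continuity of $\exp$ and of multiplication in $\mc$ finishes the argument. For (c), Proposition \ref{howland} handles the factor $\det(I-F(\lambda))$ and the case $k=1$; for $k \geq 2$, one notes that $F(\cdot)^k : \Omega \to \ff(X)$ is again analytic and finite-rank-valued (products of operator-valued analytic functions are analytic, products of finite rank operators remain finite rank), so Proposition \ref{howland} applies to it and delivers analyticity of $\tr(F(\lambda)^k)$. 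The full claim then follows since $\exp$ and multiplication preserve analyticity in the scalar setting.

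The proof is essentially bookkeeping, with no step posing a genuine difficulty once Propositions \ref{prop:0} and \ref{howland} are in hand. The only spot requiring a moment's thought is (a)(iii), where one must carefully match the two expressions for the exponential correction on both sides of (\ref{eq:19}) by invoking linearity of the trace; everywhere else the argument is a direct reduction to the unregularized case combined with continuity of the exponential.
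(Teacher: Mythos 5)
Your proposal is correct and follows essentially the same route as the paper: parts (a)(i)–(iii) are obtained by direct reduction to Proposition~\ref{prop:0} together with cyclicity/linearity of the trace, and parts (b), (c) are reduced to Propositions~\ref{prop:0}(b) and~\ref{howland}. The only difference is that you spell out the bookkeeping (ranges of powers staying in $U$, the telescoping bound, analyticity of $F(\cdot)^k$) that the paper compresses into ``immediate consequences.''
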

\begin{proof}
(a) Part (i) is a direct consequence of statement (a.ii) of Proposition  \ref{prop:0}, and part (ii) follows from statement (a.iv) of the same proposition, noting that the cyclicity of the trace implies that $\tr((FB)^k)=\tr((BF)^k)$ for all $k \in \mn$. Concerning (iii), we note that by Definition (\ref{eq:16}) and Proposition \ref{prop:0} (statements (a.i) and (a.iii)), we have
\begin{eqnarray*}
&&  {\det}_p((I-F)(I-G)) = {\det}_p(I-H) =
\det(I-H) \exp\left( \sum_{k=1}^{\pp - 1} \frac 1 k \tr(H^k) \right) \\
&=& \det(I-F) \det(I-G) \exp\left( \sum_{k=1}^{\pp - 1} \frac 1 k \tr(H^k) \right) \\
&=& \det(I-F) {\det}_p(I-G) \exp\left( \sum_{k=1}^{\pp - 1} \frac 1 k \tr(H^k-G^k) \right).
\end{eqnarray*}
Part (b) and (c) are immediate consequences of Proposition  \ref{prop:0}.b and  Proposition \ref{howland}, respectively. 
\end{proof}

We conclude this section with the following estimate. For a proof, see \cite[p.1107]{MR1009163}.
\begin{prop}
 Let $p>0$. There exists a constant $\Gamma_p > 0$ such that  for every $F \in \ff(X)$
  \begin{equation}
    \label{eq:1}
    |{\det}_{p}(I-F)| \leq \exp(\Gamma_p \sum_{k} |\lambda_k(F)|^p ),
  \end{equation}
where in the sum each discrete eigenvalue of $F$ is counted according to its algebraic multiplicity.
\end{prop}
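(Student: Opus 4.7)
The plan is to reduce the claimed operator-level inequality to a purely scalar inequality for the Weierstrass primary factor
\[
E_p(z) := (1-z)\exp\Bigl(\sum_{k=1}^{\pp - 1} \frac{z^k}{k}\Bigr), \qquad z \in \mc,
\]
and then take products over the eigenvalues of $F$. The starting point is the product representation (\ref{eq:17}): applying it, we get
\[
|{\det}_p(I-F)| = \prod_j |E_p(\lambda_j(F))|,
\]
the product being finite since $F$ has only finitely many nonzero discrete eigenvalues. Hence it suffices to establish the scalar bound
\begin{equation}\label{eq:scalarplan}
|E_p(z)| \leq \exp(\Gamma_p |z|^p) \quad \text{for all } z \in \mc,
\end{equation}
with $\Gamma_p$ depending only on $p$, and then to multiply over the (finitely many, then all) eigenvalues.

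To prove \eqref{eq:scalarplan} I would split into two regimes, separated by, say, $|z| \leq 1/2$ and $|z| > 1/2$. In the small regime, the classical identity
\[
\log(1-z) = -\sum_{k=1}^{\infty} \frac{z^k}{k}, \qquad |z| < 1,
\]
gives the cancellation
\[
\log E_p(z) = -\sum_{k=\pp}^{\infty} \frac{z^k}{k},
\]
from which $|\log E_p(z)| \leq C|z|^{\pp}$ follows by summing the geometric tail. Since $\pp \geq p$ and $|z| \leq 1$, we have $|z|^{\pp} \leq |z|^p$, and therefore $|E_p(z)| \leq \exp(C|z|^p)$. In the large regime $|z| > 1/2$, I would estimate the two factors of $E_p$ separately: $|1-z| \leq 1 + |z|$ can be absorbed into the exponential as $\exp(\log(1+|z|))$, and the polynomial exponent is controlled by
\[
\Bigl|\sum_{k=1}^{\pp-1} \frac{z^k}{k}\Bigr| \leq (\pp-1) \max_{1\leq k\leq \pp-1} |z|^k.
\]
Here I would exploit the defining property $\pp - 1 < p \leq \pp$, which implies $|z|^{k} \leq |z|^p$ for $1 \leq k \leq \pp - 1$ as soon as $|z| \geq 1$, together with a crude bound absorbing the region $1/2 < |z| \leq 1$ into the constant. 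Combining these gives $|E_p(z)| \leq \exp(\Gamma_p' |z|^p)$ in the large regime as well.

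Taking $\Gamma_p$ to be the maximum of the two constants and multiplying the bound over the eigenvalues of $F$ yields
\[
|{\det}_p(I-F)| = \prod_j |E_p(\lambda_j(F))| \leq \prod_j \exp(\Gamma_p |\lambda_j(F)|^p) = \exp\Bigl(\Gamma_p \sum_j |\lambda_j(F)|^p\Bigr),
\]
which is \eqref{eq:1}. The only genuinely delicate step is the scalar inequality \eqref{eq:scalarplan} on the transition region where $|z| \sim 1$ and neither the Taylor expansion nor the crude polynomial bound is sharp; however, since we only need existence of \emph{some} constant $\Gamma_p$, any clean unified argument (as carried out in the cited reference \cite{MR1009163}) will suffice.
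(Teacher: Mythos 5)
Your proposal is correct, and it reproduces the standard proof of this Weierstrass primary-factor estimate: reduce via (\ref{eq:17}) to the scalar inequality $|E_p(z)|\le\exp(\Gamma_p|z|^p)$, which follows from the Taylor-tail cancellation $\log E_p(z)=-\sum_{k\ge\pp}z^k/k$ for small $|z|$ (using $|z|^{\pp}\le|z|^p$ there since $\pp\ge p$) and the crude comparison of $\log(1+|z|)+\sum_{k<\pp}|z|^k/k$ against $|z|^p$ for large $|z|$ (using $\pp-1<p$). The paper itself offers no proof but simply cites Dunford--Schwartz \cite[p.~1107]{MR1009163}, where essentially this argument appears, so your route matches the one intended.
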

  We emphasize that on the right-hand side of (\ref{eq:1}) we have the exponent $p$ and not the exponent $\pp$. This estimate is the main reason that we did \emph{not} restrict ourselves to integer $p$ in the definition of the regularized determinants.

A short calculation shows that $\Gamma_p \leq 1/p$ if $p \leq 1$. Moreover, for integer-valued $p \geq 2$ one has $\Gamma_p \leq (p-1)/p$ if $p \neq 3$ and $\Gamma_3 \leq 1$, see \cite{MR2391269, MR3035142}.

\section{Determinants for Quasi-normed $l_p$-ideals}

In this section we extend the definition of the regularized determinant ${\det}_p(I-L)$ from the case where the operator $L$ is of finite rank to the case where it is an element of a wider class of compact operators. 

 \begin{definition}\label{def1}
Let $p > 0$ and let $({\bi},\|.\|_{\bi})$  denote a quasi-normed subspace of $\bdd(X)$, such that the following holds:
\begin{enumerate}
    \item[(A1)] The finite rank operators $\ff(X)$ are dense in $({\bi}, \|.\|_{\bi})$.
\item[(A2)] $\|L\| \leq \|L\|_{\bi}$ for all $L \in {\bi}$.
\item[(A3)] If $L \in {\bi}$ and $A,B \in \bdd(X)$, then $ALB \in {\bi}$ and 
\begin{equation}
  \label{eq:2}
  \|ALB\|_{\bi} \leq \|A\| \|L\|_{\bi} \|B\|.
\end{equation}

    \item[(A4)] There exists a constant $\gamma_p$ such that for every $L \in {\bi}$ 
  \begin{equation}
    \label{eq:15}
\|(\lambda_j(L))\|_{l_p}:= \left( \sum_{j} |\lambda_j(L)|^p \right)^{1/p} \leq \gamma_{p} \|L\|_{\bi}.    
  \end{equation}
Here $(\lambda_j(L))$ denotes the sequence of discrete eigenvalues of $L$, counted according to their algebraic multiplicity. 
\end{enumerate}
Then $({\bi},\|.\|_{\bi})$ is called an \emph{$l_p$-ideal} in $\bdd(X)$ with \emph{eigenvalue constant} $\gamma_p$.
\end{definition}
By (A1) and (A2) every operator $L \in {\bi}$ is the $\|.\|$-limit of finite rank operators and hence is indeed compact. In particular, the non-zero spectrum of $L$ consists of discrete eigenvalues which can accumulate at zero only. For a discussion of various examples of $l_p$-ideals we refer to Section 5.
\begin{rem}\label{rem42}
The term '$l_p$-ideal' has been introduced for the purposes of this paper only and is certainly not standard. A more standard terminology for such an ideal would be an \emph{approximative embedded ideal $( \text{in } \bdd(X))$ of eigenvalue type $l_p$}. However, in this article we will stick to the shorter version. 
\end{rem}
The next theorem provides the desired extension of the regularized determinants.
 
\begin{thm}\label{thm:1} 
Let $p>0$ and let $({\bi},\|.\|_{\bi})$ be an $l_p$-ideal in $\bdd(X)$. Then there exists a unique continuous function 
${\det}_{p,{\bi}}(I-.): ({\bi}, \|.\|_{\bi}) \to \mc$ such that 
\begin{equation}
  \label{eq:9}
{\det}_{p,{\bi}}(I-F)={\det}_{p}(I-F), \quad F \in \ff(X).  
\end{equation}
Moreover, for all $K,L \in {\bi}$ 
\begin{equation}
  \label{eq:7} 
|{\det}_{p,{\bi}}(I-L)| \leq \exp(\gamma_p^p \Gamma_p \|L\|_{\bi}^p )
\end{equation}
and
\begin{eqnarray}
&& |{\det}_{p,{\bi}}(I-K)- {\det}_{p,{\bi}}(I-L)| \nonumber \\
&\leq& \|K-L\|_{\bi} \exp\left(q_{\bi}^{2p} \gamma_p^p \Gamma_p ( \|K\|_{\bi}+\|L\|_{\bi}+1)^p\right).    \label{eq:8}
\end{eqnarray}
Here $\Gamma_p$ is as given in (\ref{eq:1}),  and $q_{\bi}$ and $\gamma_p$ denote the quasi-triangle and eigenvalue constant of $({\bi},\|.\|_{\bi})$, respectively.
\end{thm}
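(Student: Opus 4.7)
The overall strategy is a standard extension-by-continuity argument: first establish the Lipschitz-type estimate \eqref{eq:8} for operators in $\ff(X)$, then invoke the density assumption (A1) to extend ${\det}_p(I-\cdot)$ uniquely to a continuous function on $\bi$. Uniqueness of such an extension is automatic from density; existence hinges on the finite-rank Lipschitz bound, which ensures that $({\det}_p(I-F_n))_n$ is Cauchy in $\mc$ whenever $(F_n) \subset \ff(X)$ is Cauchy in $(\bi,\|\cdot\|_\bi)$.

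The technical core is thus the proof of \eqref{eq:8} for finite-rank $K, L$. My approach is a Cauchy-integral-formula argument on the function
\[
g(z) := {\det}_p(I - L - z(K-L)), \qquad z \in \mc,
\]
which is entire by Proposition \ref{prop:1}(c) (since $L + z(K-L)$ is a finite-rank analytic family), with $g(0) = {\det}_p(I-L)$ and $g(1) = {\det}_p(I-K)$. Estimate \eqref{eq:1} combined with (A4) yields
\[
|g(z)| \leq \exp\bigl(\Gamma_p \gamma_p^p\, \|L + z(K-L)\|_\bi^p\bigr),
\]
and two applications of the quasi-triangle inequality (first to $L + z(K-L)$ with modulus $\rho := 1 + 1/\|K-L\|_\bi$, then to $\|K-L\|_\bi$) bound the quasi-norm on the circle $|z| = \rho$ by $q_\bi^2(\|K\|_\bi + \|L\|_\bi + 1)$, up to absorbing small constants into the $q_\bi$-factors. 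The identity
\[
g(1) - g(0) = \frac{1}{2\pi i} \oint_{|z|=\rho} g(z) \Bigl(\frac{1}{z-1} - \frac{1}{z}\Bigr) dz,
\]
together with $|(z-1)^{-1} - z^{-1}| \leq 1/[\rho(\rho-1)]$ on $|z|=\rho$ (and $\rho - 1 = \|K-L\|_\bi^{-1}$), then produces exactly the right-hand side of \eqref{eq:8}.

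The extension itself is now routine. For $L \in \bi$ pick $F_n \in \ff(X)$ with $\|F_n - L\|_\bi \to 0$; since $(F_n)$ is $\|\cdot\|_\bi$-Cauchy with bounded quasi-norms, the finite-rank Lipschitz bound shows that $({\det}_p(I-F_n))_n$ is Cauchy in $\mc$, and setting ${\det}_{p,\bi}(I-L)$ equal to its limit defines a function independent of the approximating sequence (check by interleaving two such sequences). Continuity of ${\det}_{p,\bi}(I-\cdot)$ and \eqref{eq:8} for arbitrary $K, L \in \bi$ then follow by passing to the limit in the finite-rank Lipschitz bound along approximating sequences, the $q_\bi^{2p}$-factor comfortably absorbing the loss $\limsup_n \|F_n\|_\bi \leq q_\bi \|L\|_\bi$ intrinsic to quasi-norms.

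The main obstacle will be \eqref{eq:7}, whose stated constant carries no extra $q_\bi$ factor and so cannot be obtained by naive density (which would only yield $\exp(q_\bi^p \gamma_p^p \Gamma_p \|L\|_\bi^p)$). The cleanest route is to first establish the Hadamard-type product representation
\[
{\det}_{p,\bi}(I-L) = \prod_j (1 - \lambda_j(L)) \exp\Bigl(\sum_{k=1}^{\pp - 1} \frac{\lambda_j(L)^k}{k}\Bigr)
\]
for the extended determinant --- for instance by Vitali's theorem applied to $z \mapsto {\det}_p(I - zF_n)$ on compacta of $\mc$, together with standard convergence of eigenvalue sequences for norm-convergent compact operators --- after which \eqref{eq:1} bounds the product directly and (A4) applied to $L$ itself delivers \eqref{eq:7}.
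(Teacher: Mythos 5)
Your overall strategy --- an entire-function bound, a Cauchy-integral Lipschitz estimate, then extension by density --- is exactly the paper's route (Lemmas 4.4 and 4.5 in the paper correspond to your two steps). The small difference is the parametrization of the segment: you use $g(z) = {\det}_p(I - L - z(K-L))$ with $g(0), g(1)$, whereas the paper uses the symmetric $g(\lambda) = {\det}_p\bigl(I - \tfrac12(K+L) - \lambda(K-L)\bigr)$ with $g(\pm\tfrac12)$. This is not purely cosmetic: on the circle $|z| = 1 + 1/\|K-L\|_{\bi}$ your estimate gives $q_{\bi}^2(\|K\|_{\bi} + 2\|L\|_{\bi} + 1)$ rather than $q_{\bi}^2(\|K\|_{\bi} + \|L\|_{\bi} + 1)$, and the excess $\|L\|_{\bi}$ term cannot be ``absorbed into $q_{\bi}$''. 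Switching to the midpoint parametrization (with the Cauchy estimate applied to $g'$ on $[-\tfrac12, \tfrac12]$) eliminates this and reproduces \eqref{eq:8} exactly.

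The genuine gap is your proposed derivation of \eqref{eq:7} via the Hadamard product representation \eqref{eq:20}. The paper explicitly warns (Remark 4.4, immediately after the theorem) that it is \emph{not known} whether ${\det}_{p,\bi}(I-L)$ equals $\prod_j(1-\lambda_j(L))\exp\bigl(\sum_{k<\pp}\lambda_j(L)^k/k\bigr)$ for general $l_p$-ideals; the stated obstruction is precisely that the right-hand side need not be continuous on $(\bi, \|\cdot\|_\bi)$. Your sketch --- Vitali's theorem plus ``standard convergence of eigenvalue sequences'' --- does not overcome this. Vitali only upgrades the pointwise convergence ${\det}_p(I-zF_n) \to {\det}_{p,\bi}(I-zL)$ to local uniform convergence, which you already have from the Lipschitz estimate. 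To identify the limit with the infinite product one additionally needs uniform-in-$n$ control of the tails $\sum_{j>m}|\lambda_j(F_n)|^p$, and assumption (A4) delivers only a bound on the full sums, not equicontinuity of the partial sums; this is exactly the sufficient criterion the paper cites from \cite{MR792906} as \emph{not} automatically satisfied. So the ``cleanest route'' you propose is in fact blocked. The paper instead obtains \eqref{eq:7} by passing to the limit in \eqref{eq:11} along a finite-rank approximating sequence, using that $\|F_n\|_\bi \to \|L\|_\bi$. (You are right that this step tacitly uses continuity of the quasi-norm along convergent sequences, which is not completely automatic for a quasi-norm, but it holds for all examples in Section~5 and is a far smaller assumption than the validity of \eqref{eq:20}.) The fix is simply to drop the product-formula detour and use the naive density argument as the paper does.
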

\begin{definition}
The function ${\det}_{p,{\bi}}(I-L)$ will be called the \emph{$p$-regularized determinant of $I-L$} (with respect to ${\bi}$).  
\end{definition}
\begin{rem}
  We emphasize that it is not at all clear whether in general we have that
  \begin{equation}
    \label{eq:20}
    {\det}_{p,{\bi}}(I-L) = \prod_j \left( (1-\lambda_j(L)) \exp\left( \sum_{k=1}^{\pp - 1} \frac {\lambda_j^k(L)}{k} \right) \right), \qquad L \in \bi,
  \end{equation}
(where we use the convention that $\lambda_{N+1}(L)=\lambda_{N+2}(L)=\ldots=0$ in case there are only $N$ discrete eigenvalues). Here the problem is that we don't know whether the right-hand side, considered as a function from $(\bi, \|.\|_\bi)$ to $\mc$, is continuous. Of course, if it is continuous, then the identity holds true in view of the previous theorem and (\ref{eq:17}). We note that the right-hand side is indeed continuous if the following implication holds (see \cite{MR792906}): Whenever $(L_n)_{n \in \mn} \subset (\bi,\|.\|_\bi)$ is convergent, the sequence of partial sums $\left( \sum_{j=1}^m |\lambda_j(L_n)|^p\right)_{m \in \mn}$ is uniformly convergent in $n$. For instance, this sufficient criterion  is satisfied for operators from the Weyl-number  ideal $\mathcal{S}_p^{(x)}(X)$ (see Example \ref{ex:snumber}). For further examples, see \cite{MR792906} and \cite{MR917067}.
\end{rem}
For the proof of Theorem \ref{thm:1} we will need two lemmas. The first one is a standard result from real analysis. We leave its proof to the reader.
\begin{lemma}\label{lem:2}
  Let $(\bn,\|.\|_\bn)$ be a quasi-normed space and let $M \subset \bn$ be dense. Suppose that 
$ f : M \to \mc$ is locally uniformly continuous (with respect to the induced topology), i.e. its restriction to any ball $\{ x \in M : \|x-x_0\|_\bn < r\}, x_0 \in \bn$, $r>0,$ is uniformly continuous. Then there exists a unique continuous function $ g : (\bn,\|.\|_\bn) \to \mc$ such that $g|_M = f$.
\end{lemma}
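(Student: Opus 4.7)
The plan is the standard Hausdorff-style completion/extension argument, and the essential point is that, as stated in Section 2(c), the quasi-norm $\|.\|_\bn$ induces a metrizable topology on $\bn$. Hence convergence, Cauchyness and continuity can all be discussed sequentially in terms of the quasi-norm itself, despite the lack of a true triangle inequality. The entire argument is then routine; the only mild bookkeeping nuisance is that estimates of the form $\|a-b\|_\bn \leq q(\|a-c\|_\bn + \|c-b\|_\bn)$ carry an extra constant, but since $q$ is fixed this loses nothing.

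First I would define $g$ pointwise. Given $x \in \bn$, by density pick a sequence $(x_n) \subset M$ with $\|x_n - x\|_\bn \to 0$. Then $(x_n)$ is Cauchy, and for any fixed radius $r > 0$ all but finitely many $x_n$ lie in the ball $B(x,r) = \{y \in \bn : \|y-x\|_\bn < r\}$. Applying local uniform continuity of $f$ on $B(x,r) \cap M$ to the tail of $(x_n)$ shows that $(f(x_n))$ is Cauchy in $\mc$, so I can set $g(x) := \lim_{n \to \infty} f(x_n)$. Independence of the approximating sequence follows from the usual interleaving trick: if $(y_n) \subset M$ also converges to $x$, then the interleaved sequence $x_1, y_1, x_2, y_2, \ldots$ converges to $x$ as well, so its $f$-image is Cauchy and forces $\lim f(x_n) = \lim f(y_n)$. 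Taking constant sequences yields $g|_M = f$.

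For continuity of $g$ at $x \in \bn$, given $y_n \to x$ in $\bn$ I would choose, for each $n$, an approximant $z_n \in M$ with $\|z_n - y_n\|_\bn < 1/n$. The quasi-triangle inequality gives $z_n \to x$, hence $f(z_n) \to g(x)$. On the other hand $g(y_n)$ can be realized as $\lim_{k} f(w_{n,k})$ along a sequence $w_{n,k} \in M$ with $w_{n,k} \to y_n$, and for fixed $r$ all the points $y_n$, $z_n$ and (for large $k$) $w_{n,k}$ lie in the ball $B(x,r)$; uniform continuity on $B(x,r) \cap M$ then bounds $|f(w_{n,k}) - f(z_n)|$ uniformly in $k$ for large $n$, giving $|g(y_n) - f(z_n)| \to 0$ and hence $g(y_n) \to g(x)$. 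Uniqueness of $g$ is immediate: any two continuous extensions of $f$ must agree on the dense set $M$ and therefore everywhere. The main thing to watch throughout is simply that each use of the triangle inequality on $\bn$ is replaced by its quasi-analogue, which does not affect the convergence statements.
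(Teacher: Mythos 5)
The paper explicitly leaves this lemma's proof to the reader, so there is no proof in the source to compare against. Your argument is the standard Hausdorff-style extension: define $g(x)$ as the limit of $f$ along an $M$-valued sequence converging to $x$ (Cauchyness of $(f(x_n))$ coming from local uniform continuity together with the quasi-triangle inequality), verify independence of the choice of sequence by interleaving, and establish continuity by a three-$\eps$ argument using nearby approximants $z_n\in M$; this is correct, including the observation that all estimates survive the extra fixed quasi-triangle constant $q$, and is surely what the authors had in mind.
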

The next lemma  extends \cite[Theorem II.4.1]{MR1744872} from normed to quasi-normed spaces. Its proof is almost literally the same as the proof of the original result. We include a sketch of the proof for completeness.
\begin{lemma}\label{lem:1}
 Let $(\bn,\|.\|_\bn)$ be a quasi-normed space with quasi-triangle constant $q_\bn$. Let $f: \bn \to \mc$ be such that
 \begin{enumerate}
     \item the function $\mc \ni \lambda \mapsto f(x+\lambda y)$ is entire for all $x,y \in \bn$, and
     \item there exists a monotonically non-decreasing function $\Theta$ on $[0,\infty)$ such that for all $x \in \bn$ we have 
$$ |f(x)| \leq \Theta(\|x\|_\bn).$$
 \end{enumerate}
Then for all $x,y \in \bn$ the following holds:
\begin{equation}
  \label{eq:10}
  |f(x)-f(y)| \leq \|x-y\|_\bn \Theta \left(q_\bn^2(\|x\|_\bn + \|y\|_\bn +1)\right).
\end{equation}
\end{lemma}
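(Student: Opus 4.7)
The plan is to adapt the classical Cauchy-estimate argument for entire functions of polynomial growth, with one modification to prevent the quasi-triangle constant from appearing more than quadratically. If $x=y$ the bound is trivial, so I would assume $x \neq y$ from the outset. The key idea is to work with the \emph{symmetric} parametrization $z := (x+y)/2$ and $w := (y-x)/2$, and to define $F(\lambda) := f(z + \lambda w)$. By hypothesis~(i) this is entire in $\lambda \in \mc$, and $F(-1)=f(x)$, $F(1)=f(y)$, so that $f(y)-f(x) = \int_{-1}^{1} F'(t)\,dt$.

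Next, I would apply Cauchy's derivative estimate on the circle $|\lambda-t|=s$ with $s := 1/\|w\|_\bn = 2/\|y-x\|_\bn$, yielding $|F'(t)| \leq s^{-1}\max_{|\lambda-t|=s}|F(\lambda)|$. Integrating over $t \in [-1,1]$ produces the overall prefactor $2/s = \|y-x\|_\bn$, which is exactly the Lipschitz constant demanded by the conclusion. All that remains is to verify that $|F(\lambda)| \leq \Theta(q_\bn^2(\|x\|_\bn + \|y\|_\bn + 1))$ uniformly on the relevant circles; hypothesis~(ii) and the monotonicity of $\Theta$ then close the estimate.

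To bound $|F(\lambda)| \leq \Theta(\|z + \lambda w\|_\bn)$, I would invoke the quasi-triangle inequality twice. First, for $|\lambda|\leq 1+s$, it gives $\|z+\lambda w\|_\bn \leq q_\bn(\|z\|_\bn + (1+s)\|w\|_\bn)$, which the identity $s\|w\|_\bn=1$ collapses to $q_\bn(\|z\|_\bn + \|w\|_\bn + 1)$. Second, applying the quasi-triangle inequality to $z$ and $w$ separately yields $\|z\|_\bn, \|w\|_\bn \leq \tfrac{q_\bn}{2}(\|x\|_\bn + \|y\|_\bn)$, whence $\|z\|_\bn + \|w\|_\bn \leq q_\bn(\|x\|_\bn + \|y\|_\bn)$. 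Combining, $\|z+\lambda w\|_\bn \leq q_\bn^2(\|x\|_\bn + \|y\|_\bn) + q_\bn \leq q_\bn^2(\|x\|_\bn + \|y\|_\bn + 1)$, the last step using $q_\bn \geq 1$.

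The main subtlety I anticipate is simply the bookkeeping of the quasi-triangle factors. The exponent $2$ in $q_\bn^2$ arises because the quasi-triangle inequality is invoked at exactly two independent stages, and the symmetric parametrization is what keeps it there. The more obvious asymmetric choice $F(\lambda) := f(x+\lambda(y-x))$, treated by the analogous Cauchy argument, produces an extra factor in front of $\|x\|_\bn$ inside $\Theta$ which cannot be absorbed into $q_\bn^2(\|x\|_\bn + \|y\|_\bn + 1)$; the midpoint/half-difference split restores the symmetry in $x$ and $y$ and gives the correct constant.
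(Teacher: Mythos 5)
Your proof is correct and follows essentially the same route as the paper (which adapts \cite[Theorem II.4.1]{MR1744872}): the symmetric midpoint parametrization, the fundamental theorem of calculus combined with Cauchy's derivative estimate on a circle of radius inversely proportional to $\|x-y\|_\bn$, and careful bookkeeping of the quasi-triangle constant. The paper uses $g(\lambda)=f(\tfrac12(x+y)+\lambda(x-y))$ with $\rho=\|x-y\|_\bn^{-1}$, which is your $F$ up to the harmless reparametrization $F(\lambda)=g(-\lambda/2)$, $s=2\rho$; your observation about why the asymmetric parametrization $f(x+\lambda(y-x))$ yields a slightly worse constant is correct and a nice supplementary remark.
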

\begin{proof}
We can assume that $x \neq y$. Let $g(\lambda)= f(\frac 1 2(x+y)+\lambda(x-y))$. By assumption (i) the function $g$ is entire and so as in the proof of \cite[Theorem II.4.1]{MR1744872} we can use the mean value theorem and Cauchy's integral formula to prove that for every $\rho > 0$
\begin{equation}
  \label{eq:41}
|f(x)-f(y)| \leq \sup_{-1/2 \leq t \leq 1/2} |g'(t)| \leq \frac 1 \rho \sup_{|\lambda| \leq \rho + \frac  1 2} |g(\lambda)|.  
\end{equation}
Now for $\rho = \|x-y\|_\bn^{-1}$ and $|\lambda| \leq  \rho + \frac 1 2$ we can estimate, using that $q_\bn \geq 1$,
\begin{eqnarray*}
  \left\|\frac 1 2(x+y)+\lambda(x-y)\right\|_\bn &\leq& q_\bn \left( \frac 1 2 \|x+y\|_\bn + |\lambda|  \|x-y\|_\bn \right) \\
&\leq& q_\bn \left( \frac 1 2 (\|x+y\|_\bn + \|x-y\|_\bn)  + \rho   \|x-y\|_\bn \right) \\
&\leq& q_\bn \left( q_\bn (\|x\|_\bn + \|y\|_\bn)  +  1\right) \\
&\leq& q_\bn^2 \left(\|x\|_\bn + \|y\|_\bn  + 1 \right).
\end{eqnarray*}
By assumption (ii) this implies that for $\rho = \|x-y\|_\bn^{-1}$ and $|\lambda| \leq \rho + \frac 1 2$
$$ |g(\lambda)| \leq \Theta\left(\left\|\frac 1 2 (x+y) + \lambda (x-y) \right\|_\bn \right) \leq \Theta( q_\bn^2 \left(\|x\|_\bn + \|y\|_\bn  + 1 \right))$$
and so from (\ref{eq:41}) we obtain that
$$ |f(x)-f(y)| \leq \|x-y\|_\bn \Theta \left( q_\bn^2 \left(\|x\|_\bn + \|y\|_\bn  + 1 \right)\right).$$
\end{proof}
We are now prepared for the  proof of Theorem \ref{thm:1}.
\begin{proof}[Proof of Theorem \ref{thm:1}]
Let us define $f: (\ff(X), \|.\|_{\bi}) \to \mc$ by $ f(F)= {\det}_p (I-F)$.  
Then from estimate (\ref{eq:1}) and assumption (A4) we obtain that
\begin{equation}
  \label{eq:11}
 |f(F)| \leq \exp( \Gamma_p \sum_k |\lambda_k(F)|^p) \leq \exp( \Gamma_p \gamma_p^p \|F\|_{\bi}^p), \quad F \in \ff(X).  
\end{equation}
Furthermore, Proposition \ref{prop:1}.c implies that $\mc \ni \lambda \mapsto f(F+\lambda G)$ is an entire function for all $F,G \in \ff(X)$. We can hence apply Lemma \ref{lem:1} (with $\Theta(r)=\exp(\gamma_p^p \Gamma_p r^p))$ to obtain that for all $F,G \in \ff(X)$
\begin{equation}
  \label{eq:12}
  |f(F)-f(G)| \leq \|F-G\|_{\bi} \exp\left(\gamma_p^p \Gamma_p  q_{\bi}^{2p} (\|F\|_{\bi} + \|G\|_{\bi} +1)^p \right).
\end{equation}
In particular, this estimate shows that $f : (\ff(X), \|.\|_{\bi}) \to \mc$ is locally uniformly continuous. Since $\ff(X)$ is dense in $({\bi},\|.\|_{\bi})$ by assumption (A1),  Lemma \ref{lem:2} implies that there exists a unique continuous function ${\det}_{p,{\bi}}(I-.) : ({\bi}, \|.\|_{\bi}) \to \mc$ such that ${\det}_{p,{\bi}}(I-F) = f(F)={\det}_p(I-F)$ for $F \in \ff(X)$. The validity of (\ref{eq:7}) and (\ref{eq:8}) is an immediate consequence of (\ref{eq:11}),(\ref{eq:12}),(A1),(A2) and the continuity of ${\det}_{p,{\bi}}(I-.)$.
\end{proof}
In the following proposition we gather some important properties of the $p$-regularized determinant.
\begin{prop}\label{prop:4}
Let $p>0$ and let $({\bi},\|.\|_{\bi})$ be an $l_p$-ideal in $\bdd(X)$. Let $L \in {\bi}, F \in \ff(X)$  and $B \in \bdd(X)$. Then the following holds:
\begin{enumerate}
    \item ${\det}_{p,{\bi}}(I-LB)={\det}_{p,{\bi}}(I-BL)$.
    \item If $H:=F+L-FL$, then 
  \begin{small}
\begin{eqnarray}
      \label{eq:18}
&&  {\det}_{p,{\bi}}((I-F)(I-L)) \nonumber \\
&=& \det(I-F) {\det}_{p,{\bi}}(I-L) 
\exp \left( \sum_{k=1}^{\pp-1} \sum_{m=0}^{k-1} \frac{\tr(F(I-L)L^mH^{k-1-m})}{k} \right).
\end{eqnarray}      
  \end{small}
\item ${\det}_{p,{\bi}}(I-L) \neq 0$ if and only if $I-L$ is invertible in $\bdd(X)$.
\end{enumerate}
\end{prop}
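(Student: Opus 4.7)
The unifying idea is that all three parts are already known for finite-rank operators by Proposition \ref{prop:1}, and that $\det_{p,\bi}(I-\cdot)$ extends these identities continuously to $\bi$ by Theorem \ref{thm:1}; one then approximates $L \in \bi$ by finite-rank operators $L_n$ (available by (A1)) and passes to the limit. Part (iii) requires one extra ingredient: two different multiplicative factorizations of $I-L$, which are handled by the formula just proved in (ii).

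For (i), pick $L_n \in \ff(X)$ with $\|L_n - L\|_\bi \to 0$. Proposition \ref{prop:1}(a.ii) gives $\det_p(I - L_n B) = \det_p(I - B L_n)$; the ideal axiom (A3) forces $L_n B \to LB$ and $B L_n \to BL$ in $\|\cdot\|_\bi$; continuity of $\det_{p,\bi}$ then closes the argument.

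For (ii), the difficulty is that applying Proposition \ref{prop:1}(a.iii) to $L_n$ produces the exponent $\sum_k \tr(H_n^k - L_n^k)/k$ with $H_n := F + L_n - FL_n$, and $\tr(L_n^k)$ has no limit since $\tr$ is not defined on $\bi \setminus \ff(X)$. The remedy is the telescoping identity $H^k - L^k = \sum_{m=0}^{k-1} H^{k-1-m}(H-L)L^m$ combined with $H - L = F(I-L)$; applying cyclicity of the trace (Proposition \ref{prop:0}(a.iv)) summand-by-summand rewrites the finite-rank exponent as
$$ \tr(H_n^k - L_n^k) = \sum_{m=0}^{k-1} \tr(F(I-L_n) L_n^m H_n^{k-1-m}). $$
Each term on the right has range contained in $\ran F$, a fixed finite-dimensional subspace independent of $n$, so Proposition \ref{prop:0}(b) tells us that the trace depends continuously on the operator in operator norm. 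Since convergence in $\|\cdot\|_\bi$ implies convergence in operator norm (by (A2)), and $L_n \to L$ forces $H_n \to H$ in operator norm as well, the traces converge to $\tr(F(I-L)L^m H^{k-1-m})$. The LHS of Proposition \ref{prop:1}(a.iii) converges to $\det_{p,\bi}((I-F)(I-L))$ by (A3) and Theorem \ref{thm:1}, and formula (\ref{eq:18}) follows.

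For (iii), suppose first $I-L$ is invertible. Choose $F \in \ff(X)$ with $\|L - F\|_\bi$ (hence by (A2) also $\|L - F\|$) so small that $I - F$ is invertible. Set $M := (I-F)^{-1}(L - F) \in \bi$ (using (A3)), whose $\bi$-norm can be made arbitrarily small. A direct computation gives $(I-F)(I-M) = (I-F) - (L-F) = I - L$, so (ii) yields
$$ \det_{p,\bi}(I-L) = \det(I-F)\,\det_{p,\bi}(I-M)\,\exp(\ldots). $$
All three factors are nonzero: $\det(I-F) \neq 0$ by Proposition \ref{prop:0}(a.ii), $\det_{p,\bi}(I-M)$ is close to $\det_{p,\bi}(I) = 1$ for $\|M\|_\bi$ small by (\ref{eq:8}), and the exponential of a finite sum is always nonzero. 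Conversely, if $I - L$ is not invertible then, since $L \in \bi$ is compact, $1$ is a discrete eigenvalue of $L$. Let $P$ be the corresponding Riesz projection; it is finite-rank, idempotent, and commutes with $L$. Setting $F := LP \in \ff(X)$ and $M := L(I-P) \in \bi$ (by (A3)), one checks $FM = L^2 P(I-P) = 0$ and $F + M = L$, so $(I-F)(I-M) = I - L$. Applying (ii) yields
$$ \det_{p,\bi}(I-L) = \det(I-LP)\,\det_{p,\bi}(I-M)\,\exp(\ldots), $$
and $\det(I - LP) = 0$ because any eigenvector of $L$ at $1$ lies in $\ran P$ and is fixed by $LP$, so $I - LP$ is not invertible (Proposition \ref{prop:0}(a.ii)).

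The main technical obstacle is in (ii): the finite-rank identity must be recast in a form whose individual trace terms are meaningful and continuous when one factor leaves $\ff(X)$. The telescoping-plus-cyclicity manipulation inserts the factor $F$ into every trace and accomplishes exactly this. Once (ii) is in place, (i) and (iii) reduce to routine approximation together with the two well-chosen factorizations of $I - L$ exhibited above.
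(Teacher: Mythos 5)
Your proofs of (i) and (ii) follow the paper essentially line by line: the finite-rank approximation, the use of (A3) for convergence in $\|\cdot\|_{\bi}$, the telescoping identity $H^k-L^k=\sum_{m=0}^{k-1}H^{k-1-m}(H-L)L^m$ combined with $H-L=F(I-L)$, the cyclicity of the trace, and the fact that all resulting operators have range in $\operatorname{Ran}(F)$ so that Proposition \ref{prop:0}(b) applies. Your proof of (iii), however, takes a genuinely different route. You prove the two implications separately, with two distinct factorizations: for ``$I-L$ invertible $\Rightarrow$ determinant nonzero'' you set $M=(I-F)^{-1}(L-F)$ with $F$ close to $L$ (which requires $I-L$ to be invertible so that $I-F$ is too), and for the converse you use the Riesz projection at the eigenvalue $1$ to write $I-L=(I-LP)(I-L(I-P))$ with $\det(I-LP)=0$. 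Both directions are correct. The paper instead uses a single factorization that works unconditionally: with $G=L-F$ small one has $I-L=(I-F(I-G)^{-1})(I-G)$, where $I-G$ is automatically invertible, and then $\det_{p,\bi}(I-L)\neq 0 \Leftrightarrow \det(I-F(I-G)^{-1})\neq 0 \Leftrightarrow I-F(I-G)^{-1}=(I-L)(I-G)^{-1}$ invertible $\Leftrightarrow I-L$ invertible. The key asymmetry is that the invertible factor is placed on the right, so the construction never presupposes anything about $I-L$; your choice of $M$ requires $I-F$ invertible and hence cannot serve both directions. Your route is longer but entirely valid, and the Riesz-projection factorization is a nice idea not present in the paper's argument.
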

Concerning (ii) we note that $(I-F)(I-L)=I-H$ and that $H \in {\bi}$.
\begin{proof}
(i) By (A3) we have $LB,BL \in {\bi}$. Moreover, by (A1) there exists $(L_n)_{n \in \mn} \subset \ff(X)$ with $\|L-L_n\|_{\bi} \to 0$ for $n \to \infty$, so by (A3) $ \|L_nB - LB \|_{\bi} \to 0$ and $\|BL_n - BL\|_{\bi} \to 0$ for $n \to \infty$ as well. Using statement (a.ii) of Proposition \ref{prop:1} we thus obtain
$$ {\det}_{p,{\bi}}(I-LB)= \lim_n {\det}_{p}(I-L_nB) = \lim_n {\det}_{p}(I-BL_n)={\det}_{p,{\bi}}(I-BL).$$
(ii) Again, let $(L_n)_{n \in \mn} \subset \ff(X)$ with $\|L-L_n\|_{\bi} \to 0$ for $n \to \infty$. Then by (A3) also $\|H-H_n\|_{\bi} \to 0$ for $n \to \infty$, where $H_n:= F+L_n-FL_n \in \ff(X)$. Hence, using (\ref{eq:19}) we obtain
\begin{eqnarray*}
&&  {\det}_{p,{\bi}}((I-F)(I-L)) = {\det}_{p,{\bi}}(I-H) \\
&=& \lim_n {\det}_{p}(I-H_n) = \lim_n {\det}_{p}((I-F)(I-L_n)) \\
&=& \lim_n \det(I-F) {\det}_p(I-L_n) \exp \left( \sum_{k=1}^{\pp-1} \frac{\tr(H_n^k-L_n^k)}{k} \right).
\end{eqnarray*}
We already know that $\lim_n {\det}_p(I-L_n)={\det}_{p,{\bi}}(I-L)$. Let us show that
$$ \lim_n \exp \left( \sum_{k=1}^{\pp-1} \frac{\tr(H_n^k-L_n^k)}{k} \right)$$
exists as well. To this end, note that for every $1 \leq k \leq \pp -1$ we have
$$ H_n^k-L_n^k = \sum_{m=0}^{k-1} H_n^{k-1-m}(H_n-L_n)L_n^m=\sum_{m=0}^{k-1} H_n^{k-1-m}F(I-L_n)L_n^m,$$
so by the linearity and cyclicity of the trace:
$$ \tr(H_n^k-L_n^k) = \sum_{m=0}^{k-1} \tr(F(I-L_n)L_n^mH_n^{k-1-m}).$$
The ranges of the operators $F(I-L_n)L_n^mH_n^{k-1-m}$ are all contained in $\ran(F)$ and 
$$ \|F(I-L_n)L_n^mH_n^{k-1-m}-F(I-L)L^mH^{k-1-m}\| \to 0 \quad (n \to \infty)$$
by (A2) and the fact that $H_n \overset{{\bi}}{\to} H$ and $L_n \overset{{\bi}}{\to} L$. Hence, Proposition \ref{prop:0}.b implies that 
$$ \lim_n \tr(H_n^k-L_n^k) = \sum_{m=0}^{k-1} \tr(F(I-L)L^mH^{k-1-m})$$ 
and so 
$$ \lim_n \exp \left( \sum_{k=1}^{\pp-1} \frac{\tr(H_n^k-L_n^k)}{k} \right) 
= \exp \left( \sum_{k=1}^{\pp-1} \sum_{m=0}^{k-1} \frac{\tr(F(I-L)L^mH^{k-1-m})}{k} \right),$$
concluding the proof of (ii).

(iii) Since $(\bi, \|.\|_\bi) \ni K \mapsto {\det}_{p,{\bi}}(I-K) \in \mc$ is  continuous and ${\det}_{p,{\bi}}(I)={\det}_{p,{\bi}}(I-0)=1,$ there exists some $1>\delta > 0$ such that ${\det}_{p,{\bi}}(I-K) \neq 0$ if $\|K\|_{\bi}<\delta$.  

By (A1) and (A2) we can find $F \in \ff(X)$ and $G \in {\bi}$ such that $\|G\| \leq \|G\|_{\bi}< \delta$ and $L=F+G$. In particular, this implies that 
$ {\det}_{p,{\bi}}(I-G) \neq 0$ and that $I-G$ is invertible in $\bdd(X)$. Writing 
$$ I-L=(I-F(I-G)^{-1})(I-G)$$
and using part (ii) of the present proposition with $F'=F(I-G)^{-1}, L'=G$ and $H'=F'+L'-F'L'=L$, we obtain that 
\begin{eqnarray*}
&&{\det}_{p,{\bi}}(I-L)  \\
&=& {\det}(I-F(I-G)^{-1}) {\det}_{p,{\bi}}(I-G)   \exp 
\left( \sum_{k=1}^{\pp-1} \sum_{m=0}^{k-1} \frac{\tr(F G^m L^{k-1-m})}{k} \right).
\end{eqnarray*}
With our above preparations we can thus conclude that ${\det}_{p,{\bi}}(I-L) \neq 0$ if and only if $\det(I-F(I-G)^{-1})\neq 0$, which is the case if and only if $I-F(I-G)^{-1}=(I-L)(I-G)^{-1}$ is invertible in $\bdd(X)$. So ${\det}_{p,{\bi}}(I-L) \neq 0$ if and only if $I-L$ is  invertible in $\bdd(X)$.
\end{proof}

We are finally prepared for the introduction of regularized perturbation determinants.
\begin{definition}
 Let $p>0$ and let $({\bi},\|.\|_{\bi})$ be an $l_p$-ideal in $\bdd(X)$. For $A \in \bdd(X)$ and $K \in {\bi}$ the product $K(\lambda-A)^{-1}, \lambda \in \rho(A),$ is an element of ${\bi}$ by (A3). So the function
\begin{equation}
  \label{eq:4}
 D=D^{A,K}_{p,{\bi}}: \rho(A) \ni \lambda \mapsto {\det}_{p,{\bi}}(I-K(\lambda-A)^{-1})  
\end{equation}
is well defined. It is called the \emph{p-regularized perturbation determinant} of $A$ by $K$ (with respect to ${\bi}$).  
\end{definition}
The following theorem is the main result of this section.

\begin{thm}\label{thm:2}
Let $p>0$ and let $({\bi},\|.\|_{\bi})$ be an $l_p$-ideal in $\bdd(X)$ with eigenvalue constant $\gamma_p$. Then the following holds:
\begin{enumerate}
    \item $\lim_{|\lambda| \to \infty} D(\lambda)=1$.
    \item $D$ is analytic on $\rho(A)$.
    \item For $\lambda \in \rho(A)$ we have
  \begin{equation}
    \label{eq:3}
    |D(\lambda)| \leq \exp \left( \gamma_p^p \Gamma_p \|K(\lambda-A)^{-1}\|_{\bi}^p \right),
  \end{equation}
where $\Gamma_p$ is as given in (\ref{eq:1}).
    \item $D(\lambda)= 0$ if and only if $\lambda \in \sigma(A+K)$.
    \item Let $\lambda_0 \in \rho(A) \cap \sigma_d(A+K)$. Then its algebraic multiplicity as an eigenvalue of $A+K$ coincides with its order as a zero of $D$.
\end{enumerate}
\end{thm}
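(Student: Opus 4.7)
Parts (i)--(iv) reduce to routine consequences of Sections 3 and 4. For (i), the bound $\|(\lambda-A)^{-1}\|\le(|\lambda|-\|A\|)^{-1}$ combined with (A3) gives $\|K(\lambda-A)^{-1}\|_{\bi}\to 0$ as $|\lambda|\to\infty$, and the Lipschitz estimate \eqref{eq:8} applied at $0$ yields $D(\lambda)\to{\det}_{p,\bi}(I)=1$. For (ii), approximate $K$ by finite-rank $K_n\to K$ in $\bi$; by (A3) the family $\lambda\mapsto K_n(\lambda-A)^{-1}$ converges to $\lambda\mapsto K(\lambda-A)^{-1}$ in $\|.\|_{\bi}$ uniformly on compact subsets of $\rho(A)$, so \eqref{eq:8} transports this to locally uniform convergence $D^{A,K_n}\to D$. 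Each approximant is analytic by Proposition \ref{howland}, and analyticity of $D$ then follows from Weierstrass's theorem. Part (iii) is immediate from \eqref{eq:7} with $L=K(\lambda-A)^{-1}$, and (iv) combines Proposition \ref{prop:4}(iii) with the identity $I-K(\lambda-A)^{-1}=(\lambda-A-K)(\lambda-A)^{-1}$.

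The content lies in (v), where the plan is to factor out the ``infinite-rank'' part of the perturbation and reduce to a finite-dimensional determinantal identity. Fix $\lambda_0\in\rho(A)\cap\sigma_d(A+K)$. Using (A1) and (A3), split $K=F+G$ with $F\in\ff(X)$ and $\|G\|_{\bi}$ so small that $\|G(\lambda-A)^{-1}\|_{\bi}<1/2$ on some open disc $U\ni\lambda_0$ (on which $(\lambda-A)^{-1}$ is uniformly bounded). Then $I-G(\lambda-A)^{-1}$ is invertible on $U$, so $B:=A+G$ satisfies $U\subset\rho(B)$, and a direct computation produces the factorization
\begin{equation*}
I-K(\lambda-A)^{-1}=(I-\widetilde F(\lambda))(I-G(\lambda-A)^{-1}),\qquad \widetilde F(\lambda):=F(\lambda-B)^{-1}\in\ff(X),
\end{equation*}
with $\ran\widetilde F(\lambda)\subset R:=\ran(F)$ for all $\lambda\in U$. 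Proposition \ref{prop:4}(ii) applied to this factorization yields
\begin{equation*}
D(\lambda)=\det(I-\widetilde F(\lambda))\cdot{\det}_{p,\bi}(I-G(\lambda-A)^{-1})\cdot E(\lambda),
\end{equation*}
where $E(\lambda)$ and ${\det}_{p,\bi}(I-G(\lambda-A)^{-1})$ are analytic on $U$, and the latter is non-vanishing by Proposition \ref{prop:4}(iii). Hence the order of $\lambda_0$ as a zero of $D$ equals the order of $\lambda_0$ as a zero of the finite-rank determinant $\det(I-\widetilde F(\lambda))=\det(I_R-\widetilde F(\lambda)|_R)$.

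It remains to identify this latter order with the algebraic multiplicity $\dim\ran P_{A+K}(\lambda_0)$, which is the main technical obstacle. Writing $A+K=B+F$ with $\lambda_0\in\rho(B)$ and $F\in\ff(X)$, this is the classical Weinstein--Aronszajn / Gohberg--Sigal identity for finite-rank perturbations. I would establish it via a Jordan-chain correspondence: the assignment $v\mapsto Fv$ maps generalized eigenvectors of $B+F$ at $\lambda_0$ to root vectors of the matrix pencil $\lambda\mapsto I_R-\widetilde F(\lambda)|_R$ at $\lambda_0$, with inverse $w\mapsto(\lambda_0-B)^{-1}w$ on true eigenvectors (and its higher-order analogue on longer chains). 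The matrix argument principle (Jacobi's formula for the logarithmic derivative of a determinant) then identifies the total length of these Jordan chains with the order of zero of $\det(I_R-\widetilde F(\lambda)|_R)$ at $\lambda_0$, completing the proof. The subtlety is the bookkeeping when the pencil has non-semisimple Jordan structure, which is where the Gohberg--Sigal machinery is needed.
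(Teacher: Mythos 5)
Parts (i)--(iv) of your proposal essentially coincide with the paper's arguments (local uniform convergence of finite-rank approximants plus Weierstrass for analyticity, estimate \eqref{eq:7} for the bound, Proposition \ref{prop:4}(iii) plus the factorization $(\lambda-A-K)(\lambda-A)^{-1}$ for the zero set). The interesting difference is in part (v).

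For (v) you split $K=F+G$ with $F$ of finite rank and $G$ small, factor $I-K(\lambda-A)^{-1}=(I-F(\lambda-B)^{-1})(I-G(\lambda-A)^{-1})$ with $B=A+G$, and reduce to showing that the order of $\lambda_0$ as a zero of $\lambda\mapsto\det\bigl(I_R-F(\lambda-B)^{-1}|_R\bigr)$ equals the algebraic multiplicity $m(\lambda_0;A+K)$. Up to this reduction your argument is correct and the factorization does hold. The gap is the final step: you defer the multiplicity identification to a Jordan-chain correspondence and the ``Gohberg--Sigal machinery,'' i.e.\ you invoke the Weinstein--Aronszajn identity for the finite-rank pencil $I_R-F(\lambda-B)^{-1}|_R$ rather than proving it. Since the finite-rank piece $F$ is essentially arbitrary, the pencil can have nontrivial Jordan structure and the counting statement is genuinely nontrivial; as written this is a citation, not a proof, and it is precisely the hard part of the theorem.

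The paper avoids this entirely by a more clever choice of splitting. Instead of decomposing the perturbation $K$, it decomposes the \emph{perturbed operator} using the Riesz projection $P=P_{A+K}(\lambda_0)$: with $T:=(A+K)P$ and $T^\perp:=(A+K)(I-P)$ one has the local factorization
\begin{equation*}
I-K(\lambda-A)^{-1}=(I-\lambda^{-1}T)\bigl(I-(T^\perp-A)(\lambda-A)^{-1}\bigr),
\end{equation*}
where the second factor is invertible near $\lambda_0$ and its determinant therefore contributes no zero. The payoff of this choice is that $T$ is finite rank with $\sigma(T)=\{\lambda_0\}$, so $\det(I-\lambda^{-1}T)=(1-\lambda_0/\lambda)^{\operatorname{Rank}(P)}$ is explicit, and $\operatorname{Rank}(P)$ is, by definition, the algebraic multiplicity. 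No Weinstein--Aronszajn or Gohberg--Sigal input is needed. I would recommend you either prove the finite-rank counting lemma you are invoking (which is substantial work) or adopt the paper's Riesz-projection factorization, which reduces (v) to a one-line determinant computation.
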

\begin{proof}
(i) This follows from the fact that ${\det}_{p,\bi}(I)=1$, the continuity of the $p$-regularized determinant and the fact that, by (A3), $K(\lambda-A)^{-1} \overset{{\bi}}{\to} 0$ for $|\lambda| \to \infty$.

(ii) By (A1) there exists a sequence $(K_n) \subset \ff(X)$ with $\|K_n-K\|_{\bi} \to 0$ for $n \to \infty$. Let $\Omega \subset \rho(A)$ be any compact set. Then $\dist(\Omega, \sigma(A)) > 0$ and 
$$ s_A(\Omega) := \sup_{\lambda \in \Omega} \|(\lambda-A)^{-1}\| < \infty.$$
In particular, using (A3) we obtain that
$$ \|K(\lambda-A)^{-1}-K_n(\lambda-A)^{-1}\|_{\bi}\leq \|K-K_n\|_{\bi} \|(\lambda-A)^{-1}\| \leq s_A(\Omega) \|K-K_n\|_{\bi}$$
for $\lambda \in \Omega$, which shows that 
$$ \sup_{\lambda \in \Omega} \|K_n(\lambda-A)^{-1} -K(\lambda-A)^{-1}\|_{\bi} \to 0.$$
Since this is true for any compact $\Omega \subset \rho(A)$, estimate (\ref{eq:8}) implies that 
$${\det}_{p,{\bi}}(I-K_n(\lambda-A)^{-1}) \overset{n \to \infty}{\to} {\det}_{p,{\bi}}(I-K(\lambda-A)^{-1})=D(\lambda)$$
compactly on $\rho(A)$. But the function $${\det}_{p,{\bi}}(I-K_n(\lambda-A)^{-1}) = {\det}_{p}(I-K_n(\lambda-A)^{-1})$$ is analytic on $\rho(A)$ by Proposition \ref{prop:1}.c.

 (iii) This follows immediately from estimate (\ref{eq:7}).

(iv) From statement (iii) of Proposition \ref{prop:4} we know that $D(\lambda)=0$ if and only if $I-K(\lambda-A)^{-1}=(\lambda-(A+K))(\lambda-A)^{-1}$ is not invertible in $\bdd(X)$. This is the case if and only if $\lambda \in \sigma(A+K)$.

(v) Let $\lambda_0 \in \rho(A) \cap \sigma_d(A+K)$. It is no restriction to assume that $\lambda_0 \neq 0$. By part (iv) we know that $D(\lambda_0)=0$. Let us denote the Riesz projection of $B:=A+K$ with respect to $\lambda_0$ by $P$, and set $T:=BP$ and $T^\perp:=B(I-P)$. Note that $T$ is of finite rank, with $\sigma(T)=\{\lambda_0\}$, and that $\lambda_0 \notin \sigma(T^\perp)$. In particular, there exists a ball $U_r(\lambda_0)$ around $\lambda_0$ such that $0 \notin U_r(\lambda_0)$ and such that $\lambda-A$ and $\lambda-T^\perp$ are invertible for all $\lambda \in U_r(\lambda_0)$. Now a short computation, using $TT^\perp=T^\perp T=0$ and $B=T+T^\perp$, shows that for $\lambda \in U_r(\lambda_0)$
$$ I-K(\lambda-A)^{-1} = (I-\lambda^{-1}T)(I-(T^\perp-A)(\lambda-A)^{-1}).$$ 
Hence, by Proposition \ref{howland} and statement (ii) of Proposition \ref{prop:4} there exists a holomorphic function $F_{B,A,p} : U_r(\lambda_0) \to \mc$ such that for $\lambda \in U_r(\lambda_0)$
$$ D(\lambda) = {\det}(I-\lambda^{-1}T){\det}_{p, {\bi}}(I-(T^\perp-A)(\lambda-A)^{-1})\exp(F_{B,A,p}(\lambda)).$$
The operator $I-(T^\perp-A)(\lambda-A)^{-1}=(\lambda-T^\perp)(\lambda-A)^{-1}$ is invertible for $\lambda \in U_r(\lambda_0)$, so from statement (iii) of Proposition \ref{prop:4} it follows that the multiplicity of $\lambda_0$ as a zero of $D$ coincides with its order as a zero of $\lambda \mapsto {\det}(I-\lambda^{-1}T) = (1-\lambda^{-1}\lambda_0)^{\rank(P)}$. 
But the rank of $P$ coincides with the algebraic multiplicity of $\lambda_0$ as an eigenvalue of $B=A+K$.
\end{proof}

\section{Examples of $l_p$-ideals}\label{sec:examples}

In this section we will discuss several examples of $l_p$-ideals. We start with the most important one.
 \begin{ex} (Quasi-Banach operator ideals of eigenvalue type $l_p$ (in the sense of Pietsch))
Let $\bdd$ denote the class of all bounded linear operators between Banach spaces, i.e. $ \bdd := \cup_{X,Y} \bdd(X,Y).$ A subclass $\ac \subset \bdd$ together with a mapping $\alpha : \ac \to \mr_+$ is called a \emph{quasi-Banach operator ideal} (in the sense of Pietsch) if for all Banach spaces $X,Y$ the components $\ac(X,Y):= \ac \cap \bdd(X,Y)$ have the following properties:
\begin{enumerate}
      \item $\ff(X,Y) \subset \ac(X,Y)$ and $\alpha(I_\mc)=1$. Here $I_\mc : \mc \to \mc$ denotes the identity operator.
    \item $(\ac(X,Y), \alpha)$ is a quasi-Banach space with a quasi-triangle constant $q_\ac$ independent of $X$ and $Y$.
 \item If $L \in \ac(X,Y), A \in \bdd(Y,Y_0), B \in \bdd(X_0,X)$ for Banach spaces $X_0,Y_0$, then $ALB \in \ac(X_0,Y_0)$ and $\alpha(ALB) \leq \|A\| \alpha(L) \|B\|$.
\end{enumerate}
We note that (i) - (iii) also imply that 
\begin{enumerate}
    \item[(iv)] $\|L\| \leq \alpha(L)$ for all $L \in \ac(X,Y)$ and all Banach spaces $X,Y$. 
\end{enumerate}
If all components $(\ac(X,Y),\alpha)$ are Banach spaces, then $(\ac, \alpha)$ is called a \emph{Banach operator ideal}. A quasi-Banach operator ideal $(\ac, \alpha)$ is called \emph{approximative}, if $\ff(X,Y)$ is dense in $(\ac(X,Y),\alpha)$ for all Banach spaces $X,Y$. We note that for every quasi-Banach operator ideal $\ac$ the class
$$\ac^0 := \cup_{X,Y} \ac^0(X,Y):= \cup_{X,Y} \overline{\ff(X,Y)}^\alpha,$$
where $\overline{(.)}^\alpha$ denotes the closure with respect to $\alpha$, is an approximative quasi-Banach operator ideal, called the \emph{approximative kernel} of $\ac$. Finally, the quasi-Banach operator ideal $\ac$ is said to be \emph{of eigenvalue type $l_p$}, if, for all Banach spaces $X$, all operators $L \in \ac(X):=\ac(X,X)$ have a compact power and their eigenvalue sequence $(\lambda_n(L))$ is in $l_p(\mn)$. If this is the case, there will exist a constant $c_p \geq 1$ such that for all Banach spaces $X$ and all $L \in \ac(X)$ we have
\begin{equation}
  \label{eq:6}
\|(\lambda_j(L))\|_{l_p} \leq c_{p} \alpha(L). 
\end{equation}
For all these results, and much more, we refer to Pietsch's monographs \cite{MR582655, MR917067}. 

We can now conclude that if $\ac$ is an approximative quasi-Banach operator ideal of eigenvalue type $l_p$ (satisfying (\ref{eq:6})),  then $(\ac(X),\alpha)$ is an $l_p$-ideal in $\bdd(X)$, with eigenvalue constant $c_p$, for all Banach spaces $X$.
 \end{ex}
\begin{rem}
Most of the  more specific $l_p$-ideals considered in the following examples arise as components of some approximative quasi-Banach operator ideal $\ac$ as discussed above. Hence, one might well ask why we didn't restrict our attention to such ideals from the start. Our answer to this question is twofold: first, we will discuss at least one example which does not exactly fit into this scheme (the entropy number quasi-ideal). More importantly, we preferred to make only those assumptions that were absolutely necessary for our construction of a well-behaved regularized perturbation determinant (for instance, we didn't need that an $l_p$-ideal is complete).
\end{rem}
For details on the following examples we refer to the books by Pietsch \cite{MR917067} and K\"onig \cite{MR889455}, and references therein.
\begin{ex} ($p$-summing operators) \label{ex:summing} 
Let $p \geq 1$. An operator $L \in \bdd(X,Y)$ is called \emph{$p$-summing} if there exists $c>0$ such that for an arbitrary choice of finitely many $x_1,\ldots,x_n \in X$ we have
\begin{equation}
  \label{eq:5}
  \left( \sum_{j=1}^n \|Lx_j\|_Y^p \right)^{1/p} \leq c \sup\left\{ \left( \sum_{j=1}^n |x'(x_j)|^p \right)^{1/p}: x' \in X', \|x'\|_{X'}=1\right\}. 
\end{equation}
The smallest constant $c$ with this property is called the $p$-summing norm of $L$, written as $\|L\|_{\Pi_p}$. The class $(\Pi_p, \|.\|_{\Pi_p})$ of all $p$-summing operators between Banach spaces is a Banach operator ideal of eigenvalue type $l_q$, where $q:=\max(2,p)$, and
$$ \|(\lambda_j(L))\|_{l_q} \leq \|L\|_{\Pi_p}$$
for every $L \in \Pi_p(X)$ and all Banach spaces $X$. Note that this ideal is not approximative (for instance, there exist non-compact  $p$-summing operators). We conclude that for a generic Banach space $X$ the approximative kernel $(\Pi^0_p(X),\|.\|_{\Pi_p})$ is  an $l_q$-ideal with eigenvalue constant $1$. 

\begin{rem}\label{rem:persson}
For more specific Banach spaces $X$, one can show that the finite rank operators are dense in $(\Pi_p(X),\|.\|_{\Pi_p})$. For instance, if the dual space $X'$ has the approximation property and, in addition, is either reflexive or separable, then $(\Pi^0_p(X), \|.\|_{\Pi_p})=(\Pi_p(X), \|.\|_{\Pi_p})$, as follows from the results in \cite{MR0247504} and \cite{MR0243323}.    
\end{rem}
\end{ex}
\begin{ex} (nuclear operators)
An operator $L \in \bdd(X,Y)$ is called \emph{nuclear} if there exists a representation
\begin{equation}
  \label{eq:13}
  Lx = \sum_j x_j'(x)y_j, \quad x \in X,
\end{equation}
where $x_j' \in X'$ and $y_j \in Y$ satisfy $\sum_j \|x_j'\|_{X'} \|y_j\|_Y < \infty$. If this is the case we write $L \in \operatorname{Nuc}(X,Y)$ and define 
$$\|L\|_{\operatorname{Nuc}}:= \inf \sum_j \|x_j'\|_{X'} \|y_j\|_Y,$$
the infimum being taken over all representations of the form (\ref{eq:13}). The class $(\operatorname{Nuc},\|.\|_{\operatorname{Nuc}})$ of all nuclear operators between Banach spaces is an approximative Banach operator ideal of eigenvalue type $l_2$ and
$$ \|(\lambda_j(L))\|_{l_2} \leq \|L\|_{\operatorname{Nuc}}$$
for every $L \in \operatorname{Nuc}(X)$ and every Banach space $X$. In particular, $(\operatorname{Nuc}(X),\|.\|_{\operatorname{Nuc}})$ is an $l_2$-ideal with eigenvalue constant $1$.
\end{ex}
\begin{rem}
Let $(\Omega,\mu)$ be a $\sigma$-finite measure space and set $X=L_q(\Omega,\mu)$ for some $q \geq 1$. Then $(\operatorname{Nuc}(L_q(\Omega,\mu)),\|.\|_{\operatorname{Nuc}})$ is an $l_p$-ideal with eigenvalue constant $1$, where $1/p= 1 -|1/q-1/2|$, see \cite[Theorem 22]{MR1863710}.
\end{rem}
\begin{ex} (s-number ideals) \label{ex:snumber}
A map $s: \bdd \to l_\infty(\mn)$ assigning to every bounded operator $A \in \bdd$ a sequence $(s_n(A))_{n \in \mn} \subset \mr_+$ is called an s-number function if the following holds: 
\begin{enumerate}
    \item $\|A\| =s_1(A) \geq s_2(A) \geq \ldots \geq 0$ for all $A \in \bdd(X,Y)$.
    \item $s_{n+m-1}(A+B) \leq s_n(A) + s_m(B)$ for all $A,B \in \bdd(X,Y), n,m \in \mn$.
    \item $s_n(ABC) \leq \|A\| s_n(B) \|C\|$ for all $A \in \bdd(Y,Y_0),B \in \bdd(X,Y), C \in \bdd(X_0,X)$.
    \item $s_n(T)=0$ if $\rank(T) < n$ and $s_n(I_{l_2^n})=1$, where $l_2^n=l_2(\{1,\ldots,n\})$. 
\end{enumerate}
The sequence $(s_n(A))_{n \in \mn}$ is called the sequence of s-numbers of $A$. Now for $p > 0$ and Banach spaces $X,Y$ define
\begin{equation}
  \label{eq:14}
  \mathcal{S}_{p}^{(s)}(X,Y):= \{ A \in \bdd(X,Y) : \|A\|_{p}^{(s)}:= \|(s_n(A))\|_{l_p} < \infty\}.
\end{equation}
Then the class $(\mathcal{S}_{p}^{(s)}, \|.\|_{p}^{(s)})$ is a quasi-Banach operator ideal. 

We note that if $X=Y=\hil$ is a complex Hilbert space, every $s$-number sequence coincides with the sequence of singular numbers and hence $\mathcal{S}_{p}^{(s)}(\hil)=S_p(\hil)$, the von Neumann-Schatten class (see the introduction). However, for general Banach spaces there exists a multitude of $s$-number sequences, the most important ones being the \emph{approximation numbers}
\begin{equation}
  \label{eq:22}
  a_n(A)= \inf \{ \|A-F_n\| : F_n \in \ff(X,Y), \rank(F_n) < n \},
\end{equation}
the \emph{Gelfand numbers}
\begin{equation}
  \label{eq:23}
  c_n(A)= \inf \{ \|A_{X_n}\| : X_n \subset X \text{ has codimension } < n \text{ in } X \}
\end{equation}
and the \emph{Weyl numbers}
\begin{equation}
  \label{eq:24}
  x_n(A)= \sup\{ a_n(AB) : B \in \bdd(l_2^n, X), \|B\| \leq 1\}.
\end{equation}
We remark that $x_n(A) \leq c_n(A) \leq a_n(A), n \in \mn,$ and so
$$ S_p^{(a)}(X,Y) \subset S_p^{(c)}(X,Y) \subset S_p^{(x)}(X,Y).$$
Moreover, the following holds: Let $s \in \{a,c,x\}$. Then $(\mathcal{S}_p^{(s)},\|.\|_p^{(s)})$ is a quasi-Banach operator ideal of eigenvalue type $l_p$ and
$$ \|(\lambda_j(L))\|_{l_p} \leq 2^{1/p} \sqrt{2e} \|L\|_p^{(s)}, \qquad L \in \mathcal{S}_p^{(s)}(X).$$
In addition, the approximation- and Gelfand number ideals are approximative. In particular, we obtain that $(S_p^{(a)}(X),\|.\|_p^{(a)}), (S_p^{(c)}(X),\|.\|_p^{(c)})$ and $((S_p^{(x)})^0(X),\|.\|_p^{(x)})$ are $l_p$-ideals with eigenvalue constant $2^{1/p} \sqrt{2e}$.
\end{ex}  
\begin{rem}
If the Banach space $X$ has the bounded approximation property, then the finite rank operators are dense in $(S_p^{(x)}(X),\|.\|_p^{(x)})$ and so $((S_p^{(x)})^0(X),\|.\|_p^{(x)})=(S_p^{(x)}(X),\|.\|_p^{(x)})$, see \cite{MR889455}, page 220. 
\end{rem}

\begin{ex} (entropy number ideal)
 For $L \in \bdd(X,Y)$ and $n \in \mn$ let $e_n(L)$ denote the infimum of all $\eps>0$ such that there exist $y_1, \ldots, y_q \in Y, q \leq 2^{n-1},$ with the property that 
$$ L(B_X) \subset \cup_{j=1}^q ( \{y_j\} + \eps B_Y).$$
Here $B_X,B_Y$ denote the closed unit balls in $X$ and $Y$, respectively. The sequence $(e_n(L))_{n \in \mn}$ is called the sequence of entropy numbers of $L$. We note that it satisfies properties (i)-(iii) of an $s$-number sequence, but not property (iv).

For $p>0$ define 
\begin{equation}
  \label{eq:25}
  \mathcal{S}_{p}^{(e)}(X,Y):= \{ L \in \bdd(X,Y) : \|L\|_{p}^{(e)}:= \|(e_n(L))\|_{l_p} < \infty.\}.
\end{equation}
Then for every Banach space $X$ the space $(\mathcal{S}_p^{(e)}(X), \|.\|_p^{(e)})$ is an $l_p$-ideal in $\bdd(X)$, see \cite{MR1098497}. We note that the class $(\mathcal{S}_p^{(e)}, \|.\|_p^{(e)})$ is not a quasi-Banach operator ideal in the sense of Pietsch, since it does not satisfy $\|I_\mc\|_p^{(e)}=1$.   
\end{ex}

For some results on the relations between the different $l_p$-ideals discussed in this section we refer to \cite{MR1098497}, \cite{MR889455}, \cite{MR582655}, \cite{MR917067} and \cite{MR2300779}. Moreover, we note that for specific classes of operators (like integral- or embedding operators) there exist various criteria that allow us to check whether a given operator is an element of one of the $l_p$-ideals described above. We will provide one such criterion in Example \ref{ex:hille} below. For  much more on this topic, see \cite{MR889455} and  \cite{MR917067}.
 
\section{Eigenvalues of compactly perturbed operators}

 In this section we consider a bounded operator $A \in \bdd(X)$ and a perturbation $K \in {\bi}$, where ${\bi}$ is a fixed $l_p$-ideal in $\bdd(X)$, $p>0$, with eigenvalue constant $\gamma_p$. We are interested in the discrete spectrum of the perturbed operator $A+K$.
 \begin{rem} 
Our approach in this section follows along the lines of the approach developed in \cite{MR3296588}. However, we note that in \cite{MR3296588} the authors considered the case where ${\bi}=\mathcal{S}_p^{(a)}(X)$ is the approximation number ideal only. 
 \end{rem}
Let us first note that by Weyl's theorem on the preservation of the essential spectrum under compact perturbations, we have $\sigma_{ess}(A)=\sigma_{ess}(A+K)$. To obtain some information on the discrete spectrum of $A+K$ as well, we will use the perturbation determinant
\begin{equation}
  \label{eq:26}
D=D^{A,K}_{p,{\bi}}: \rho(A) \ni \lambda \mapsto {\det}_{p,{\bi}}(I-K(\lambda-A)^{-1}),  \end{equation}
which was introduced and studied above. We start with a simple lemma.
\begin{lemma}\label{lem:simple}
 Let $\Omega \subset \rho(A)$ be open and connected with $\Omega \cap \rho(A+K) \neq \emptyset$. Then the following holds:
 \begin{enumerate}
     \item $\Omega \cap \sigma(A+K) \subset \sigma_d(A+K)$.
     \item Let $\lambda \in \Omega$. Then $\lambda \in \sigma_d(A+K)$ if and only if $ D(\lambda)= 0$, and in this case the algebraic multiplicity of $\lambda$ as a discrete eigenvalue of $A+K$ coincides with its order as a zero of $D$. 
  \end{enumerate}
\end{lemma}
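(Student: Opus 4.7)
The proof is essentially a matter of combining Weyl's theorem, the structural remark about connected components of $\mc\setminus\sigma_{ess}$ recorded in Section~2~b), and Theorem \ref{thm:2}. The strategy is to use part (i) to upgrade the equivalence ``$D(\lambda)=0 \Leftrightarrow \lambda\in\sigma(A+K)$'' from Theorem \ref{thm:2}(iv) to a statement about the \emph{discrete} spectrum in $\Omega$, and then invoke Theorem \ref{thm:2}(v) to match multiplicities.

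For part (i), I would argue as follows. Since $K\in\bi$ is compact (by the observation following Definition \ref{def1}), Weyl's theorem gives $\sigma_{ess}(A+K)=\sigma_{ess}(A)$. The assumption $\Omega\subset\rho(A)$ implies $\Omega\cap\sigma_{ess}(A)=\emptyset$, hence $\Omega\cap\sigma_{ess}(A+K)=\emptyset$. Let $\widetilde\Omega$ be the connected component of $\mc\setminus\sigma_{ess}(A+K)$ containing the connected set $\Omega$. From $\emptyset\neq\Omega\cap\rho(A+K)\subset\widetilde\Omega\cap\rho(A+K)$ and the structural result recalled in Section~2~b) (namely: if a connected component of $\mc\setminus\sigma_{ess}(Z)$ meets $\rho(Z)$, then its intersection with $\sigma(Z)$ lies in $\sigma_d(Z)$), we conclude that $\widetilde\Omega\cap\sigma(A+K)\subset\sigma_d(A+K)$. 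Restricting to $\Omega$ yields (i).

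For part (ii), the first equivalence is immediate from Theorem \ref{thm:2}(iv), which already states $D(\lambda)=0$ iff $\lambda\in\sigma(A+K)$: for $\lambda\in\Omega$, part (i) turns the right-hand side into $\lambda\in\sigma_d(A+K)$. The multiplicity statement then follows verbatim from Theorem \ref{thm:2}(v), applied to $\lambda\in\Omega\cap\sigma_d(A+K)\subset\rho(A)\cap\sigma_d(A+K)$.

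There is no real obstacle here; the only point to be careful about is that the hypotheses of the ``connected component'' principle in Section~2~b) be applied to a component of $\mc\setminus\sigma_{ess}(A+K)$ (not to $\Omega$ itself, which need not be such a component), and that one verifies the component meets $\rho(A+K)$. Both are handled by passing from $\Omega$ to the ambient component $\widetilde\Omega$ as above. Everything else is a direct citation of Theorem \ref{thm:2}.
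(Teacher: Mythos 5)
Your proof is correct and follows essentially the same route as the paper: Weyl's theorem for part (i), then Theorem \ref{thm:2}(iv) and (v) for part (ii). The one place you are more careful than the paper is in explicitly passing from $\Omega$ to the ambient connected component $\widetilde\Omega$ of $\mc\setminus\sigma_{ess}(A+K)$ before invoking the structural result from Section 2 b); the paper simply cites \cite[Theorem 2.1, p.~373]{MR1130394} without spelling this out, but the content is the same.
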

\begin{proof}
  (i) Since $\sigma_{ess}(A)=\sigma_{ess}(A+K)$ and $\Omega \subset \rho(A)$, we have $\Omega \subset \mc \setminus \sigma_{ess}(A+K)$. As $\Omega \cap \rho(A+K) \neq \emptyset$ by assumption, this implies that $\Omega \cap \sigma(A+K) \subset \sigma_d(A+K)$, see, e.g., \cite[Theorem 2.1 on page 373]{MR1130394}.

(ii) This is an immediate consequence of (i) and Theorem \ref{thm:2}.
\end{proof}
Let $\eps > 0$. We recall that the \emph{$\eps$-pseudospectrum} of $A$ is defined as 
$$ \sigma_\eps(A)=\{ \lambda \in \mc : \|(\lambda-A)^{-1}\| > 1/\eps\}.$$
\begin{rem}
In the last two decades, the $\eps$-pseudospectrum has been studied extensively, both from an analytical and a numerical perspective, see, e.g., \cite{b_Davies} and \cite{MR2155029} and references therein. We note that the set
$ \{ \lambda \in \mc: \dist(\lambda,\sigma(A)) < \eps\}$ is contained in $\sigma_\eps(A)$ but, in general, the $\eps$-pseudospectrum can be much larger.
\end{rem}
In the following we are going to establish upper bounds on the number of discrete eigenvalues of $A+K$ in certain subsets of the complement of $\sigma_{\eps}(A)$. To this end, let $\hat{\mc}=\mc \cup \{\infty\}$ denote the extended complex plane. For a simply connected subset $\Omega \subset \hat \mc \setminus \sigma_\eps(A)$, with $\infty \in \Omega$, let $\phi : \Omega \to \md$ denote a conformal mapping with $\phi(\infty)=0$. Moreover, for a subset $\Omega' \subset \Omega$ define
\begin{equation}
  \label{eq:42}
r_{\Omega}(\Omega'):= \sup_{z \in \Omega'} |\phi(z)|.  
\end{equation}
Here the values of $r_{\Omega}$ do not depend on the choice of $\phi$, since all such mappings differ only by a unimodular constant. Moreover, we note that $0 \leq r_{\Omega}(\Omega') \leq 1$ and that $r_{\Omega}(\Omega')=0$ if and only if $\Omega'=\{\infty\}$.

Finally, let us denote the number of discrete eigenvalues of $A+K$ in $\Omega'$ (counting algebraic multiplicity) by $\mathcal{N}_{A+K}(\Omega')$.
\begin{thm}\label{thm:3}
Let $\Omega \subset \hat{\mc} \setminus \sigma_\eps(A)$ be simply connected with $\infty \in \Omega$. Then for $\Omega' \subset \Omega$ with $0<r_\Omega(\Omega')<1$ the following holds:
\begin{eqnarray*}
\mathcal{N}_{A+K}(\Omega') \leq \frac{\gamma_p^p \Gamma_p}{ \eps^p \log( \frac 1 {r_\Omega(\Omega')})} \|K\|_{\bi}^p.
\end{eqnarray*} 
Here $\gamma_p$ denotes the eigenvalue constant of ${\bi}$ and $\Gamma_p$ is as given in (\ref{eq:1}).
\end{thm}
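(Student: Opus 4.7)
The strategy is the classical one of counting zeros of a bounded holomorphic function via Jensen's inequality, applied here to the perturbation determinant $D = D^{A,K}_{p,\bi}$ pulled back to the unit disk by the conformal map $\phi$. First I would establish a uniform upper bound for $|D|$ on $\Omega \cap \mc$. Since $\Omega \subset \hat\mc \setminus \sigma_\eps(A)$, every $\lambda \in \Omega \cap \mc$ lies in $\rho(A)$ and satisfies $\|(\lambda - A)^{-1}\| \leq 1/\eps$. Combined with the ideal property (A3) this yields $\|K(\lambda - A)^{-1}\|_{\bi} \leq \|K\|_{\bi}/\eps$, so Theorem \ref{thm:2}(iii) gives
\begin{equation*}
|D(\lambda)| \leq \exp\!\left( \frac{\gamma_p^p \Gamma_p \|K\|_{\bi}^p}{\eps^p}\right) =: M, \qquad \lambda \in \Omega \cap \mc.
\end{equation*}

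Next I would transfer the problem to the unit disk. Define $h : \md \setminus\{0\} \to \mc$ by $h(z) := D(\phi^{-1}(z))$; this is holomorphic, bounded by $M$, and by Theorem \ref{thm:2}(i) it satisfies $\lim_{z \to 0} h(z) = \lim_{|\lambda| \to \infty} D(\lambda) = 1$. Riemann's removable singularity theorem therefore extends $h$ to a holomorphic function on all of $\md$ with $h(0) = 1$ and $\sup_{\md} |h| \leq M$. Because $\phi$ is biholomorphic, the zeros of $h$ (with multiplicities) in $\phi(\Omega') \subset \{z : |z| \leq r_\Omega(\Omega')\}$ are in bijection with the zeros of $D$ in $\Omega'$. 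Lemma \ref{lem:simple} applies on $\Omega \cap \mc$ (it is open, connected, contained in $\rho(A)$, and intersects $\rho(A+K)$ since it contains a punctured neighbourhood of $\infty$), so the zeros of $D$ in $\Omega'$ are precisely the discrete eigenvalues of $A+K$ in $\Omega'$, with matching multiplicities.

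Finally, I would apply Jensen's formula to $h$. For every $r < 1$, Jensen's identity on the disk of radius $r$ combined with $h(0)=1$ and $|h| \leq M$ gives
\begin{equation*}
\sum_{h(z_j) = 0,\ |z_j| < r} \log\!\frac{r}{|z_j|} \leq \log M,
\end{equation*}
and letting $r \to 1$ yields $\sum_j \log(1/|z_j|) \leq \log M$ for all zeros of $h$ in $\md$. Restricting to the zeros in $\{|z| \leq r_\Omega(\Omega')\}$, each contributes at least $\log(1/r_\Omega(\Omega')) > 0$ to the sum, so the number $\mathcal{N}_{A+K}(\Omega')$ of such zeros satisfies
\begin{equation*}
\mathcal{N}_{A+K}(\Omega') \cdot \log\!\frac{1}{r_\Omega(\Omega')} \leq \log M = \frac{\gamma_p^p \Gamma_p}{\eps^p} \|K\|_{\bi}^p,
\end{equation*}
which is the claim.

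\textbf{Main obstacle.} The proof is essentially a bookkeeping exercise once the right objects are in place; nothing is technically hard. The only point requiring genuine care is verifying that the pulled-back function $h$ is a bona fide bounded holomorphic function on $\md$ with $h(0) = 1$ — this rests on the asymptotic normalization from Theorem \ref{thm:2}(i), which is the reason the perturbation determinant, rather than the regularized determinant alone, is the right tool. Everything else (the pseudospectral bound, the correspondence of zeros with eigenvalues, and the Jensen estimate) is standard once the framework of Section 4 and Lemma \ref{lem:simple} has been established.
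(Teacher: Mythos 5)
Your proof is correct and follows essentially the same route as the paper: pull $D$ back to the unit disk via $\phi$, bound $\|h\|_\infty$ using Theorem \ref{thm:2}(iii) together with the pseudospectral resolvent bound and (A3), and count zeros via Jensen's formula. The only minor differences are cosmetic — you phrase Jensen's bound as $\sum_j \log(1/|z_j|) \le \log M$ while the paper works with the integrated counting function $\int_0^1 n(h;s)\,ds/s$, and you spell out the removable-singularity step at $z=0$ (using Theorem \ref{thm:2}(i)) and the verification of the hypotheses of Lemma \ref{lem:simple} a bit more explicitly than the paper does; both are sound.
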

The previous estimate broadly generalizes a corresponding result in \cite{MR3296588}, which considered the case where ${\bi}=\mathcal{S}_p^{(a)}(X)$.
\begin{proof}[Proof of Theorem \ref{thm:3}]
Let $\phi : \Omega \to \md$ be conformal with $\phi(\infty)=0$. Consider the function $h= D \circ \phi^{-1}$ defined on $\md$. From Theorem \ref{thm:2} we obtain that $h$ is holomorphic with $h(0)=D(\infty)=1$, and so Jensen's identity (see, e.g., \cite{MR924157}) implies that for $0<r<1$
$$ \int_0^r \frac{n(h;s)}{s} ds = \frac 1 {2\pi} \int_0^{2\pi} \log |h(re^{it})| dt,$$
where $n(h;s)$ denotes the number of zeros of $h$ in $\{ w  : |w| \leq s\}$ (counting order). We can now estimate 
$$ n(h;r) \log \frac 1 r = \int_r^1 \frac{n(h;r)} s ds \leq \int_r^1 \frac{n(h;s)} s ds \leq \int_0^1 \frac{n(h;s)} s ds \leq \log \|h\|_\infty,$$
where we used Jensen's identity to obtain the last inequality. By Lemma \ref{lem:simple}, every discrete eigenvalue of ${A+K}$ in $\Omega'$ corresponds to a zero of $D$, hence to a zero of $h$ in $\phi(\Omega')$. But $\phi(\Omega')$ is a subset of the disk $\{ w : |w| \leq r_\Omega(\Omega')\}$, so from the previous inequality we obtain that
$$ \mathcal{N}_{A+K}(\Omega') \leq n(h;r_\Omega(\Omega')) \leq \frac{\log \|h\|_\infty}{\log \frac 1 {r_\Omega(\Omega')}}.$$
It remains to observe that from statement (iii) in Theorem \ref{thm:2} we also know that
\begin{eqnarray*}
 \log \|h\|_\infty &=& \sup_{\lambda \in \Omega} \log|D(\lambda)| 
\leq \gamma_p^p \Gamma_p  \sup_{\lambda \in \Omega} \|K(\lambda-A)^{-1}\|_\bi^p \\
&\leq& \gamma_p^p \Gamma_p \|K\|_{\bi}^p \sup_{\lambda \in \Omega} \|(\lambda-A)^{-1}\|^p  \leq \eps^{-p} \gamma_p^p \Gamma_p \|K\|_{\bi}^p.
\end{eqnarray*}
Here in the next to last inequality we used (A3) and in the last inequality we used the fact that $\Omega \subset \hat{\mc} \setminus \sigma_{\eps}(A)$.
\end{proof}
The previous theorem is very general, but also quite abstract. In order to make it more explicit we have at least two choices: (i) We can consider more specific operators $A$ whose pseudospectrum $\sigma_\eps(A)$ can be computed explicitly (and then apply the theorem with correspondingly chosen sets $\Omega$ and $\Omega'$), or (ii) we can consider a general $A$ but restrict our attention to simple choices of $\Omega$ and $\Omega'$ (which will allow the computation of $r_\Omega(\Omega')$). In this paper, we will stick to the second choice. For instance, in the following theorem we will provide a bound on 
\begin{equation}
  \label{eq:28}
n_{A+K}(s):= \mathcal{N}_{A+K}(\{ \lambda: |\lambda|>s\}),
\end{equation}
the number of discrete eigenvalues of ${A+K}$ outside a ball of radius $s > r(A)$. Here 
\begin{equation}
  \label{eq:43}
r(A)= \sup\{ |\lambda| : \lambda \in \sigma(A)\}  
\end{equation}
denotes the \emph{spectral radius} of $A$. As we will see, to obtain explicit results we will need to restrict ourselves to complements of balls where we have a good control of the resolvent of $A$.

In order to formulate the next theorem, let us introduce a  function $\Phi_p : (0,1) \to \mr$ given by
\begin{equation}
  \label{eq:29}
\Phi_p(x)= \frac{\left( W(\frac 1 p e^{\frac 1 p} x) \right)^p}{\left(\frac 1 p -W(\frac 1 p e^{\frac 1 p} x)  \right)^{p+1} x^p}\:,
\end{equation}
where $W : [0,\infty) \to [0,\infty)$ is the Lambert W-function defined by $W(x)e^{W(x)}=x$. This function satisfies the bound
\begin{equation}
  \label{eq:30}
  \Phi_p(x) \leq \frac{(p+1)^{p+1}}{p^p} \cdot\frac{1}{(1-x)^{p+1}}, \quad 0 < x< 1,
\end{equation}
as has been shown in \cite{MR3296588} (see the proof of Corollary 4.3 in that paper).
\begin{thm}\label{thm:4}
Suppose that for some $R \geq r(A)$ and $C_A > 0$ we have 
\begin{equation}\label{resest}
  \|(\lambda-A)^{-1}\| \leq \frac {C_A} {|\lambda|-R}, \quad |\lambda| > R.
\end{equation}
 Then for $s>R$ the following holds:
 \begin{equation}
   \label{eq:31}
  n_{A+K}(s) \leq C_A^p \gamma_p^p \Gamma_p\frac{1}{s^p} \Phi_p\left(\frac{R}{s}\right) \|K\|_{\bi}^p.   
 \end{equation}
Here the right-hand side is bounded from above by
\begin{equation}
  \label{eq:33}
C_A^p \gamma_p^p \Gamma_p \frac{(p+1)^{p+1}}{p^p} \frac{s}{(s-R)^{p+1}} \|K\|_{\bi}^p.
\end{equation}
\end{thm}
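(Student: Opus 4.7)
The plan is to reduce the estimate to an application of Theorem \ref{thm:3} by choosing $\Omega$ and $\Omega'$ as exteriors of concentric disks, then optimizing the resulting bound over a free parameter $\eps$. The resolvent estimate (\ref{resest}) immediately implies that for every $\eps>0$,
\begin{equation*}
\sigma_\eps(A) \subset \{\lambda \in \mc : |\lambda| \leq R + \eps C_A\},
\end{equation*}
so the set $\Omega_\eps := \{\lambda \in \hat{\mc} : |\lambda| > R + \eps C_A\}$ is simply connected, contains $\infty$, and lies in $\hat{\mc}\setminus \sigma_\eps(A)$.

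Given $s > R$, we now fix $\eps \in (0, (s-R)/C_A)$ so that $\Omega' := \{\lambda \in \hat\mc : |\lambda| > s\} \subset \Omega_\eps$. A conformal map from $\Omega_\eps$ onto $\md$ sending $\infty$ to $0$ is $\phi(\lambda) = (R+\eps C_A)/\lambda$, and hence $r_{\Omega_\eps}(\Omega') = (R+\eps C_A)/s \in (0,1)$. Theorem \ref{thm:3} therefore yields
\begin{equation*}
n_{A+K}(s) \leq \frac{\gamma_p^p \Gamma_p \|K\|_{\bi}^p}{\eps^p \log\bigl(s/(R+\eps C_A)\bigr)}.
\end{equation*}

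Since this holds for all admissible $\eps$, it remains to minimize the right-hand side over $\eps \in (0, (s-R)/C_A)$. Setting $u = R + \eps C_A$ and then $x = u/s \in (R/s,1)$, the optimization reduces to minimizing $g(x) = [(x - r)^p \log(1/x)]^{-1}$ over $x \in (r,1)$, with $r = R/s$. Differentiating leads to the critical-point equation $p \log(1/x) = 1 - r/x$, which after the substitution $x = e^{-w}$ becomes $(1-pw)e^{(1-pw)/p} = (r/p)e^{1/p}/\cdots$; rearranging and using the defining relation of the Lambert $W$-function gives the unique solution $x_* = r/v$ with $v = p\,W\bigl(\tfrac{r}{p}e^{1/p}\bigr) \in (r,1)$. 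Substituting back and simplifying shows
\begin{equation*}
g(x_*) = \frac{W^p}{r^p\bigl(\tfrac{1}{p}-W\bigr)^{p+1}} = \Phi_p(R/s),
\end{equation*}
where $W = W\bigl(\tfrac{1}{p}e^{1/p}(R/s)\bigr)$. Combined with the factor $C_A^p/s^p$ picked up from the substitution, this yields (\ref{eq:31}). The simplified bound (\ref{eq:33}) then follows directly from the stated inequality (\ref{eq:30}) on $\Phi_p$.

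The only non-routine step is the optimization in the third paragraph; however, because the function $\Phi_p$ has been engineered precisely to encode the optimum, the verification reduces to a careful but mechanical calculation with the defining equation of the Lambert $W$-function. Care is needed only to check that the critical point $x_*$ lies in the admissible range $(r,1)$, which follows from the monotonicity of $W$ together with the identity $W\bigl(\tfrac{1}{p}e^{1/p}\bigr) = 1/p$.
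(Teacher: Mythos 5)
Your proof is correct and follows essentially the same route as the paper: reduce to Theorem \ref{thm:3} with $\Omega$ and $\Omega'$ the exteriors of concentric disks, read off $r_\Omega(\Omega')$ from the explicit conformal map, and then optimize the resulting bound over the free radius. The only difference is cosmetic: the paper parametrizes by $t = R + \eps C_A$ and cites \cite{MR3296588} for the maximization of $(t-R)^p\log(s/t)$ that produces $\Phi_p(R/s)$, whereas you rescale to $x=t/s$ and carry out the Lambert-$W$ computation yourself (your intermediate equation has a typo but the conclusion $g(x_*)=\Phi_p(R/s)$ is right).
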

\begin{proof}
  For $s>t>R$ let us set $\eps = (t-R)/C_A$ and let us define
$$ \Omega = \{ \lambda: |\lambda|> t\} \quad \text{and} \quad \Omega'=\{\lambda : |\lambda| > s\}.$$
Then $\Omega$ is a simply connected subset of $\hat \mc$ (with $\infty \in \Omega$) and $\Omega' \subset \Omega \subset \mc \setminus \sigma_\eps(A)$ (for the last inclusion we used assumption (\ref{resest})). So we are in a position to apply Theorem \ref{thm:3}: a conformal mapping $\phi$ that maps $\Omega$ onto $\md$ (and $\infty$ onto $0$) is given by
$ \phi(\lambda)= t/\lambda$, so 
$$ r_\Omega(\Omega')= \sup_{\lambda \in \Omega'} |\phi(\lambda)| = \frac t s.$$
We thus obtain from Theorem \ref{thm:3} that for $s>t>R$
$$ n_{A+K}(s) \leq \frac{\gamma_p^p \Gamma_p}{ \eps^p \log(\frac 1 {r_\Omega(\Omega')})} \|K\|_{\bi}^p = \frac{C_A^p\gamma_p^p \Gamma_p}{ (t-R)^p \log( \frac s {t})} \|K\|_{\bi}^p.$$ 
All that remains is to maximize the function $f(t)=(t-R)^{p}\log(\frac s t), t \in (R,s)$. This had already been done in \cite{MR3296588} (see the computations preceeding Theorem 4.2 in that paper), where it was shown that 
$$ \max_{t \in (R,s)} f(t) = \frac{s^p}{\Phi_p(\frac{R}{s})}.$$
This shows the validity of (\ref{eq:31}). The validity of (\ref{eq:33}) follows from (\ref{eq:31}) and estimate (\ref{eq:30}). 
\end{proof}
It is easy to see that for $|\lambda| > \|A\|$ one has
$$ \|(\lambda-A)^{-1}\| \leq 1/(|\lambda|-\|A\|).$$
We thus obtain the following corollary of the previous theorem  (for ${\bi}=\mathcal{S}_p^{(a)}(X)$ this had already been obtained in \cite{MR3296588}).
\begin{cor}\label{cor:1}
If $s> \|A\|$, then 
\begin{equation}
  \label{eq:34}
n_{A+K}(s) \leq   \gamma_p^p \Gamma_p \frac{(p+1)^{p+1}}{p^p} \frac{s}{(s-\|A\|)^{p+1}} \|K\|_{\bi}^p.
\end{equation}
\end{cor}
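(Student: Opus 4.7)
The plan is to obtain the corollary as a direct specialization of Theorem \ref{thm:4}. The only real ingredient needed is a resolvent estimate of the form (\ref{resest}) with $R = \|A\|$ and an explicit constant $C_A$.

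First I would verify that for $|\lambda| > \|A\|$ the Neumann series representation
\begin{equation*}
(\lambda - A)^{-1} = \lambda^{-1}\bigl(I - \lambda^{-1}A\bigr)^{-1} = \sum_{k=0}^{\infty} \lambda^{-(k+1)} A^{k}
\end{equation*}
converges in $\bdd(X)$, since $\|\lambda^{-1}A\| = \|A\|/|\lambda| < 1$. A standard geometric series estimate then yields
\begin{equation*}
\|(\lambda - A)^{-1}\| \leq \sum_{k=0}^{\infty} \frac{\|A\|^{k}}{|\lambda|^{k+1}} = \frac{1}{|\lambda| - \|A\|},
\end{equation*}
which is precisely the hypothesis (\ref{resest}) of Theorem \ref{thm:4} with $R = \|A\|$ and $C_A = 1$. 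I would also note in passing that $\|A\| \geq r(A)$ is a standard fact, so the requirement $R \geq r(A)$ is satisfied.

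With these identifications in place, the conclusion is immediate: Theorem \ref{thm:4}, applied to any $s > \|A\| = R$, gives the explicit bound (\ref{eq:33}) with $C_A = 1$, namely
\begin{equation*}
n_{A+K}(s) \leq \gamma_p^p \Gamma_p \frac{(p+1)^{p+1}}{p^p} \frac{s}{(s-\|A\|)^{p+1}} \|K\|_{\bi}^p,
\end{equation*}
which is exactly (\ref{eq:34}). There is no substantial obstacle here: the only thing one has to check is the Neumann series estimate, and after that the corollary is literally a substitution $R \mapsto \|A\|$, $C_A \mapsto 1$ into the statement already proved in Theorem \ref{thm:4}.
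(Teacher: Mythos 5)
Your proof is correct and matches the paper's argument exactly: the paper also notes the elementary resolvent estimate $\|(\lambda-A)^{-1}\| \le 1/(|\lambda|-\|A\|)$ for $|\lambda|>\|A\|$ (which your Neumann series argument establishes) and then applies Theorem \ref{thm:4} with $R=\|A\|$ and $C_A=1$.
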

\begin{rem}
If we consider the number of eigenvalues of $K \in \bi$ in $\{ \lambda : |\lambda| >s\}$, then it is an immediate consequence of assumption (A4) that
$$ n_K(s) \leq \gamma_p^p \frac{\|K\|_{\bi}^p}{s^p}, \qquad s>0.$$
We see that estimate (\ref{eq:34}), up to a multiplicative constant, recovers that result (simply set $A=0$ in that inequality). However, we also see that if $A \neq 0$, then we do \emph{not} obtain that for $s \to \|A\|$
\begin{equation}
  \label{eq:44}
 n_{A+K}(s) = O\left( \frac 1 {(s-\|A\|)^{p}} \right),
\end{equation}
but only that
$$ n_{A+K}(s) = O\left( \frac 1 {(s-\|A\|)^{p+1}} \right).$$
At the moment, for $p \geq 1$, it is unknown whether the larger exponent $p+1$ in case $A \neq 0$ is really necessary (i.e. whether there exist $l_p$-ideals $\bi$ and operators $A$ and $K$ such that (\ref{eq:44}) will not be true),  or whether it is just an artefact of our methods of proof. On the other hand, it is known that for $0<p<1$, (\ref{eq:44}) can indeed not be true, even for Hilbert space operators, as follows from the results in  \cite{Hansmann11} and \cite{MR3033958}.  Moreover, there are at least some indications which suggest  that, also for $p\geq 1$, the situation for $A\neq 0$ will really be different from the case $A=0$, see, e.g., \cite[Example 5.2]{MR3296588}. 
\end{rem}

We conclude this section by noting that there is another (slightly different) way to obtain estimates on the discrete spectrum of $A+K$. For instance, if $\sigma(A)=[a,b]$ is a real interval and we could prove a bound of the form
$$ \|K(\lambda-A)^{-1}\|_\bi \leq \frac{C}{\dist(\lambda,[a,b])^\alpha |\lambda-a|^\beta |\lambda-b|^\gamma}, \quad \lambda \in \rho(A),$$
then we could invoke a theorem of Borichev, Golinskii and Kupin \cite{MR2481997} (which deals with the distribution of zeros of holomorphic functions on the unit disk which grow exponentially on the unit circle, with different rates of growth for different points of the circle), to obtain more precise bounds on the number of eigenvalues of $A+K$ in $\mc \setminus [a,b]$. Of course, to obtain a bound of the above form we would need much more information on the operators involved. We will not further pursue this method in the present paper and simply refer to \cite{DemHanauska13} for some results in this direction (for the Hilbert space setting, see also \cite{MR3077277}). 

\section{Eigenvalues of generators of $C_0$-semigroups}

In this final section we are going to apply our abstract results in a slightly different setting, namely to obtain information on the number of discrete eigenvalues of generators of $C_0$-semigroups. In particular, this will show that our results are applicable to unbounded operators as well. As a general reference for this section we refer to the monograph \cite{MR1721989}.

To begin, let us consider a closed, densely defined operator $H_0$ in the Banach space $X$ and let us assume that $H_0$ is the generator of a $C_0$-semigroup $(T_0(t))_{t \geq 0}$. As usual, in the following we will set $e^{tH_0}:=T_0(t)$. 
\begin{rem}
We recall that $(S(t))_{t \geq 0} \subset \bdd(X)$ is a $C_0$-semigroup on $X$ if the following three conditions are satisfied:
\begin{enumerate}
    \item $S(t+s)=S(t)S(s)$ for all $t,s \geq 0$,
    \item $S(0)=I$,
    \item $[0,\infty) \ni t \mapsto S(t)x \in X$ is continuous for all $x \in X$. 
\end{enumerate}
In this case there exists $M \geq 0$ and $\omega \in \mr$ such that $\|S(t)\| \leq M e^{\omega t}, t \geq 0$. The closed and densely defined operator $G$ in $X$ is the generator of $(S(t))_{t \geq 0}$ if and only if 
$$ \dom(G)=\left\{ x \in X : \lim_{h \downarrow 0} \frac{S(h)x-x}{h} \text{ exists}\right\}$$
and $Gx = \lim_{h \downarrow 0} (S(h)x-x)/h$ for $x \in \dom(G)$. The spectrum of $G$ will always be contained in the half-plane $\{ \lambda \in \mc : \Re(\lambda) \leq \omega\}$. Moreover, we note that for $x_0 \in \dom(G)$ the function $u(t):=S(t)x_0$ is the unique (classical) solution of the abstract Cauchy problem
\begin{equation}
  \label{eq:32}
  \left\{
    \begin{array}{cll}
      \frac{d}{dt} u(t) &=& Gu(t), \quad  t > 0 \\
      u(0)&=& x_0.
    \end{array}\right.
\end{equation}
For an extensive list of examples of $C_0$-semigroups and their generators we refer to \cite{MR1721989} and references therein.
\end{rem}
In the following, let us also assume that $(e^{tH_0})_{t \geq 0}$ is a \emph{contraction} semigroup, meaning that $\|e^{tH_0}\| \leq 1$ for all $t \geq 0$. In particular, this implies that the spectrum of $H_0$ is contained in the left half-plane, i.e. 
$$\sigma(H_0) \subset \{ \lambda \in \mc : \Re(\lambda) \leq 0 \}.$$

Next, let us introduce another closed, densely defined operator $H$ in $X$, which we consider as a small perturbation of $H_0$. More precisely, we assume that $H$ is the generator of a $C_0$-semigroup $(e^{tH})_{t \geq 0}$ on $X$ as well (but which no longer needs to be a contraction) and that for some $a>0$ the semigroup difference 
$e^{aH}-e^{aH_0}$ is an element of some fixed $l_p$-ideal ${\bi}$, $p>0$. 

Under these assumptions the spectrum of $H$ will be contained in a half-plane $\{ \lambda \in \mc : \Re(\lambda) \leq \omega_H\}$, with some $\omega_H$ which might be larger than $0$, and the spectrum of $H$ in the strip $\{ \lambda \in \mc : 0 < \Re(\lambda) \leq \omega_H\}$ will be purely discrete (see the next lemma). Our goal is to obtain a bound on the  number of discrete eigenvalues of $H$ in this strip.
\begin{lemma}\label{lem:inclusion}
Let $\lambda \in \sigma(H)$ with $\Re(\lambda)>0$. Then $\lambda \in \sigma_d(H)$ and $e^{a\lambda} \in \sigma_d(e^{aH})$. Moreover, concerning the algebraic multiplicities of these eigenvalues we have
$$ m(\lambda; H) \leq m(e^{a \lambda}; e^{aH}).$$
\end{lemma}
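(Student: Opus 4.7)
The plan is to combine Weyl's theorem for the compact difference $e^{aH}-e^{aH_0}$ with the spectral inclusion theorem for $C_0$-semigroups and a Riesz-projection decomposition argument.

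\emph{Step 1: Reduce to the discrete spectrum of $e^{aH}$.} Since $e^{aH}-e^{aH_0}\in\bi$ is compact (by (A1), (A2)), Weyl's theorem gives $\sigma_{ess}(e^{aH})=\sigma_{ess}(e^{aH_0})\subset\{\mu\in\mc:|\mu|\le 1\}$, where the inclusion uses $\|e^{aH_0}\|\le 1$. On the other hand, the spectral inclusion theorem for $C_0$-semigroups yields $e^{a\lambda}\in e^{a\sigma(H)}\subset\sigma(e^{aH})$. Since $\Re(\lambda)>0$ implies $|e^{a\lambda}|>1$, the point $e^{a\lambda}$ lies in the unbounded connected component of $\mc\setminus\sigma_{ess}(e^{aH})$, which intersects $\rho(e^{aH})$. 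By the preliminary fact recalled in Section~2, this forces $e^{a\lambda}\in\sigma_d(e^{aH})$.

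\emph{Step 2: Transfer the finite-dimensional structure to $H$.} Let $P$ be the Riesz projection of $e^{aH}$ at $e^{a\lambda}$; by Step~1 it has finite rank equal to $m:=m(e^{a\lambda};e^{aH})$. Writing $P$ as a contour integral of the resolvent of $e^{aH}$ and using that this resolvent commutes with every $e^{tH}$ and leaves $\dom(H)$ invariant (commuting there with $H$), one checks that $P$ leaves $\dom(H)$ invariant with $HPx=PHx$ for $x\in\dom(H)$, and that $(e^{tH})_{t\ge 0}$ leaves both $\ran(P)$ and $\ker(P)$ invariant. Because $\ran(P)$ is finite-dimensional, the restricted semigroup on $\ran(P)$ has a bounded generator, which forces $\ran(P)\subset\dom(H)$ and makes $H_P:=H|_{\ran(P)}$ a matrix. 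Consequently $X=\ran(P)\oplus\ker(P)$ is a simultaneous invariant decomposition for $(e^{tH})_{t\ge 0}$ and for $H$.

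\emph{Step 3: Locate $\lambda$ in the decomposed spectrum.} On $\ker(P)$, $e^{aH}$ restricts to an operator with spectrum $\sigma(e^{aH})\setminus\{e^{a\lambda}\}$, so applying the spectral inclusion theorem to the restricted $C_0$-semigroup on $\ker(P)$ gives $\lambda\notin\sigma(H|_{\ker(P)})$. On $\ran(P)$, the matrix identity $\sigma(e^{aH_P})=e^{a\sigma(H_P)}=\{e^{a\lambda}\}$ forces $\sigma(H_P)\subset\{\lambda+2\pi i k/a:k\in\mz\}$, a discrete set. Combining the two, $\sigma(H)$ meets a small neighborhood of $\lambda$ only in $\sigma(H_P)$, so $\lambda$ is isolated in $\sigma(H)$, and the Riesz projection $Q$ of $H$ at $\lambda$ satisfies $\ran(Q)\subset\ran(P)$. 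Therefore $\lambda\in\sigma_d(H)$ and $m(\lambda;H)=\dim\ran(Q)\le\dim\ran(P)=m(e^{a\lambda};e^{aH})$.

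The main obstacle is Step~2: carefully justifying that the bounded Riesz projection $P$ of $e^{aH}$ commutes with the unbounded generator $H$ and induces a simultaneous invariant decomposition $X=\ran(P)\oplus\ker(P)$ for both the semigroup and $H$, with $\ran(P)$ contained in $\dom(H)$. Only after this is in place can one legitimately apply the spectral inclusion theorem on $\ker(P)$ and treat $H_P$ as a finite-dimensional matrix via the usual matrix spectral mapping theorem.
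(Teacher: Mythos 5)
Your proof is correct, and Step 1 is the same argument the paper gives: Weyl's theorem shows $\sigma_{ess}(e^{aH})$ lies in the closed unit disk, so that $e^{a\lambda}$, lying outside it and in $\sigma(e^{aH})$ by spectral inclusion, must be a discrete eigenvalue. The divergence is in the transfer back to $H$. The paper simply invokes the spectral mapping theorem for the point spectrum of $C_0$-semigroups (\cite[Theorem IV.3.6]{MR1721989}) a second time to conclude both $\lambda\in\sigma_d(H)$ and $m(\lambda;H)\le m(e^{a\lambda};e^{aH})$ in one stroke. You instead re-derive exactly that result from scratch: you show that the Riesz projection $P$ of $e^{aH}$ at $e^{a\lambda}$ commutes with $(e^{tH})_{t\ge 0}$ (hence with the resolvent of $H$), reduces $H$ so that $X=\ran(P)\oplus\ker(P)$, forces $\ran(P)\subset\dom(H)$ with $H_P$ a matrix, and then combines the finite-dimensional spectral mapping theorem on $\ran(P)$ with spectral inclusion for the restricted semigroup on $\ker(P)$ to isolate $\lambda$ and confine $\ran(Q)$ inside $\ran(P)$. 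This buys a self-contained lemma at the cost of carrying out the commutation and domain-invariance technicalities (commuting a bounded Riesz projection with an unbounded generator, identifying the generator of a restricted semigroup) that the paper absorbs into a single citation; the underlying ideas are those behind the cited Engel--Nagel theorem, so the mathematical content is the same even though the presentation is more elementary.
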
 
\begin{proof}
  From the spectral inclusion theorem (see \cite[Theorem IV.3.6]{MR1721989}) we know that $e^{a \lambda} \in \sigma(e^{aH})$. By assumption, the semigroup difference $e^{aH}-e^{aH_0}$ is compact, so by Weyl's theorem
$$\sigma_{ess}(e^{aH})=\sigma_{ess}(e^{aH_0}) \subset \{ z : |z| \leq \|e^{aH_0}\|\},$$
showing that the set $\Omega:=\{ z : |z| > \|e^{aH_0}\|\}$ is a subset of the unbounded component of $\mc \setminus \sigma_{ess}(e^{aH})$. In particular, the only spectral points of $e^{aH}$ in $\Omega$ are discrete eigenvalues. Since
$$|e^{a\lambda}| = e^{a\Re(\lambda)}>1 \geq \|e^{aH_0}\|,$$
we thus obtain that $e^{a\lambda} \in \sigma_d(e^{aH})$. But then we can apply \cite[Theorem IV.3.6]{MR1721989} again to conclude that $\lambda \in \sigma_d(H)$ and that $m(\lambda; H) \leq m(e^{a\lambda}; e^{aH})$.
\end{proof}

We can now apply Corollary \ref{cor:1} to obtain the main result of this section.
\begin{thm}
  Let $H_0$ and $H$ be generators of a $C_0$-contraction-semigroup and a $C_0$-semigroup on $X$, respectively, and assume that for some $a>0$ the difference $e^{aH}-e^{aH_0}$ belongs to some $l_p$-ideal ${\bi}$, $p>0$. Then for every $s>0$
$$ \mathcal{N}_H( \{ \lambda: \Re(\lambda)>s\}) \leq C_p  \frac{e^{as}}{(e^{as}-1)^{p+1}} \|e^{aH}-e^{aH_0}\|_{\bi}^p,$$
where
$$ C_p=\frac{(p+1)^{p+1} \gamma_p^p \Gamma_p}{p^p}.$$
Here $\gamma_p$ denotes the eigenvalue constant of ${\bi}$ and $\Gamma_p$ is as given in (\ref{eq:1}).
\end{thm}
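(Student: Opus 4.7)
My plan is to lift the problem to a bounded-operator setting by evaluating the semigroups at time $a$ and then invoking Corollary~\ref{cor:1}. I set
$$
A := e^{aH_0}, \qquad K := e^{aH} - e^{aH_0},
$$
so that $A+K = e^{aH}$. Since $(e^{tH_0})_{t\geq 0}$ is a contraction semigroup we have $\|A\| \leq 1$, and $K \in \bi$ by assumption. For $s>0$, the radius $s' := e^{as}$ satisfies $s' > 1 \geq \|A\|$, so Corollary~\ref{cor:1} applies and, using $e^{as}-\|A\| \geq e^{as}-1$, it yields
$$
n_{e^{aH}}(e^{as}) \leq \gamma_p^p \Gamma_p \frac{(p+1)^{p+1}}{p^p} \frac{e^{as}}{(e^{as}-1)^{p+1}} \|e^{aH}-e^{aH_0}\|_\bi^p.
$$

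It then suffices to establish the comparison
$$
\mathcal{N}_H(\{\lambda : \Re(\lambda)>s\}) \leq n_{e^{aH}}(e^{as}).
$$
Lemma~\ref{lem:inclusion} already tells us that for every $\lambda \in \sigma_d(H)$ with $\Re(\lambda)>s$ one has $e^{a\lambda} \in \sigma_d(e^{aH})$ with $|e^{a\lambda}| = e^{a\Re(\lambda)} > e^{as}$, and $m(\lambda;H) \leq m(e^{a\lambda};e^{aH})$. The delicate point is that the exponential map $\lambda \mapsto e^{a\lambda}$ is many-to-one: several distinct discrete eigenvalues $\lambda_1,\lambda_2,\ldots$ of $H$ (differing by integer multiples of $2\pi i/a$ and thus sitting on the same vertical line) may collapse onto a single $\mu \in \sigma_d(e^{aH})$, and I need the corresponding multiplicities to add up without exceeding $m(\mu;e^{aH})$.

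To handle this I fix such a $\mu$ and consider the Riesz projection $P_\mu$ of $e^{aH}$ at $\mu$. Being a contour integral of the resolvent of $e^{aH}$, $P_\mu$ commutes with the whole semigroup $(e^{tH})_{t\geq 0}$, and therefore leaves $\dom(H)$ invariant and commutes with $H$ on $\dom(H)$. On the finite-dimensional range $R_\mu := P_\mu X$, $H$ acts as a bounded operator $H_\mu$ whose exponential equals $e^{aH}|_{R_\mu}$ and hence has spectrum $\{\mu\}$. Consequently $\sigma(H_\mu) \subset \{\lambda \in \mc : e^{a\lambda}=\mu\}$, and decomposing $R_\mu$ along the distinct preimages gives
$$
\sum_{\substack{\lambda \in \sigma_d(H)\\ e^{a\lambda}=\mu}} m(\lambda;H) = \dim R_\mu = m(\mu;e^{aH}).
$$
Summing this identity over all $\mu \in \sigma_d(e^{aH})$ with $|\mu|>e^{as}$ produces the required comparison, and combining with the bound from Corollary~\ref{cor:1} finishes the proof.

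The main obstacle is the multiplicity bookkeeping in the last paragraph: since $H$ is unbounded, one has to verify carefully that the Riesz projection of $e^{aH}$ truly reduces $H$ to a finite-dimensional invariant subspace on which the standard spectral-mapping equality for a matrix exponential can be applied. Everything else is a direct substitution into the already-established Corollary~\ref{cor:1}.
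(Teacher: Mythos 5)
Your proposal is correct and follows the same route as the paper: set $A = e^{aH_0}$, $K = e^{aH}-e^{aH_0}$ so that $A+K = e^{aH}$, apply Corollary~\ref{cor:1} at radius $r=e^{as}$ together with $\|A\|\leq 1$, and pull the count back from $\sigma_d(e^{aH})$ in $\{|z|>e^{as}\}$ to $\sigma_d(H)$ in $\{\Re\lambda>s\}$ via Lemma~\ref{lem:inclusion}. Where you differ is in the last step: the paper deduces $\mathcal{N}_H(\{\Re\lambda>s\})\leq \mathcal{N}_{A+K}(\{|z|>e^{as}\})$ from the lemma in a single line, whereas you observe that $m(\lambda;H)\leq m(e^{a\lambda};e^{aH})$ alone does not control the sum of multiplicities over the (possibly several) preimages $\lambda_j$ with $e^{a\lambda_j}=\mu$, and you close this with a Riesz-projection argument: $P_\mu$ commutes with the semigroup, hence reduces $H$ to the finite-dimensional space $R_\mu\subset\dom(H)$, the generalized eigenspaces of $H$ at all preimages sit inside $R_\mu$, and the finite-dimensional spectral mapping theorem gives $\sum_j m(\lambda_j;H)\leq\dim R_\mu=m(\mu;e^{aH})$. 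This is correct and makes explicit a point the paper leaves implicit; otherwise the two proofs are the same.
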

\begin{proof}
We apply Corollary \ref{cor:1} with $A=e^{aH_0}$ and $K=e^{aH}-e^{aH_0}$ to obtain that for $r> 1 \geq \|A\|$ the following holds:
$$ \mathcal{N}_{A+K}( \{ z : |z| > r \} \leq \frac{C_pr}{(r-\|A\|)^{p+1}} \|K\|_{\bi}^p\leq  \frac{C_pr}{(r-1)^{p+1}} \|K\|_{\bi}^p.$$
Now we choose $r=e^{as}$ and use the fact that from Lemma \ref{lem:inclusion} we obtain that
$$ \mathcal{N}_H( \{ \lambda: \Re(\lambda)>s\}) \leq \mathcal{N}_{A+K}( \{ e^{a\lambda}: \Re(\lambda)>s\}) \leq \mathcal{N}_{A+K}( \{ z : |z| > e^{as} \}).$$
\end{proof} 
In many applications both semigroups $e^{tH_0}$ and $e^{tH}$ (hence also their difference $D_t:=e^{tH}-e^{tH_0}$) are integral operators on some $L_q$-space. In this case there exists a huge variety of results which show that $D_t$ is an element of one of the classical $l_p$-ideals mentioned in Section \ref{sec:examples} if its kernel $D_t(x,y)$ satisfies suitable integrability or smoothness assumptions. We refer to \cite{MR889455} and \cite{MR917067} for a compilation of some of those results. Here, we will restrict ourselves to one easy example.
\begin{ex}[Hille-Tamarkin kernels] \label{ex:hille}
Let $(\Omega,\mu)$ be a $\sigma$-finite measure space and let $1<q<\infty$. Suppose that $H_0$ and $H$ are generators of a $C_0$-contraction-semigroup and a $C_0$-semigroup on $L_q(\Omega,\mu)$, respectively, and assume that for some $a>0$ the operator $D_a:=e^{aH}-e^{aH_0}$ is an integral operator with measurable kernel $d_a:\Omega \times \Omega \to \mc$ satisfying 
\begin{equation}
  \label{eq:36}
  \|d_a\|_{q,q'}:= \left( \int_\Omega \left( \int_\Omega |d_a(x,y)|^{q'} d\mu(y) \right)^{q/{q'}} d\mu(x) \right)^{1/q} < \infty,
\end{equation}
where $1/q+1/{q'}=1$. Then $D_a \in \Pi_q(L_q(\Omega,\mu))$ (the $q$-summing ideal) and $\|D_a\|_{\Pi_q} \leq \|d_a\|_{q,q'}$, see \cite[Theorem 3.a.3]{MR889455} . In particular, taking into account Example \ref{ex:summing} and Remark \ref{rem:persson}, we can use the previous theorem to conclude that for every $s>0$
$$\mathcal{N}_H( \{ \lambda: \Re(\lambda)>s\}) \leq \frac{(p+1)^{p+1}  \Gamma_p}{p^p} \frac{e^{as}}{(e^{as}-1)^{p+1}} \|d_a\|_{q,q'}^p,$$
where $p=\max(2,q)$.   
\end{ex} 

\def\cydot{\leavevmode\raise.4ex\hbox{.}} \def\cprime{$'$}


\end{document}